\documentclass[reqno, a4paper]{amsart}
\usepackage{amssymb, mathdots}
\usepackage{mathabx}
\usepackage{mathrsfs}
\usepackage[utf8]{inputenc}
\usepackage{amsmath,  graphicx, tensor, stackrel}
\usepackage{tikz}
\usetikzlibrary{matrix, arrows.meta}
\usetikzlibrary{cd}

\usepackage{enumerate}
\usepackage{hyperref}
\DeclareSymbolFont{bbold}{U}{bbold}{m}{n}
\DeclareSymbolFontAlphabet{\mathbbold}{bbold}
\def\qmod#1#2{{\hbox{}^{\displaystyle{#1}}}\!\big/\!\hbox{}_{
\displaystyle{#2}}}

 \def\psp#1#2%
  {\mathop{}%
   \mathopen{\vphantom{#2}}^{#1}%
   \kern-\scriptspace%
    \hskip -0.3mm{#2}} 
 \def\psb#1#2%
  {\mathop{}%
   \mathopen{\vphantom{#2}}_{#1}%
   \kern-\scriptspace%
    \hskip -0.0mm{#2}} 
 \def\pscr#1#2#3%
  {\mathop{}%
   \mathopen{\vphantom{#3}}^{#1}_{#2}%
   \kern-\scriptspace%
    \hskip -0.3mm{#3}} 

\def\C{{\mathbb C}}

\def\N{{\mathbb N}}

\def\R{{\mathbb R}}

\def\textmap#1{\mathop{\vbox{\ialign{
                                  ##\crcr
      ${\scriptstyle\hfil\;\;#1\;\;\hfil}$\crcr
      \noalign{\kern 1pt\nointerlineskip}
      \rightarrowfill\crcr}}\;}}
\def\bigtextmap#1{\mathop{\vbox{\ialign{
                                  ##\crcr
      ${\hfil\;\;#1\;\;\hfil}$\crcr
      \noalign{\kern 1pt\nointerlineskip}
      \rightarrowfill\crcr}}\;}}
      
\newcommand{\cal}{\mathcal}
\def\textlmap#1{\mathop{\vbox{\ialign{
                                  ##\crcr
      ${\scriptstyle\hfil\;\;#1\;\;\hfil}$\crcr
      \noalign{\kern-1pt\nointerlineskip}
      \leftarrowfill\crcr}}\;}}

\def\rg{{\mathfrak r}}

\newtheorem{EX}{example}[section]
\theoremstyle{remark}
\newtheorem{ex}[EX]{Example} 
\newtheorem{exs}[EX]{Examples}

\newtheorem{sz}{Satz}[section]
\theoremstyle{remark}
\newtheorem{re}[sz]{Remark} 
\theoremstyle{plain}
\newtheorem{thry}[sz]{Theorem}
\newtheorem{pr}[sz]{Proposition}
\newtheorem{co}[sz]{Corollary}
\newtheorem{dt}[sz]{Definition}
\newtheorem{lm}[sz]{Lemma}

\def\GL{\mathrm {GL}}

\def\Tors{\mathrm{Tors}}

\def\id{ \mathrm{id}}
\def\im{\mathrm{im}}

\def\bpa{\bar\partial}

\newcommand\smvee{{\hskip -0.1ex \raise 0.2ex\hbox{$\scriptscriptstyle\vee$}}\hskip -0,3ex}

\def\Setminus{\;\setminus\;}

\def\inte{\mathrm{int}}

\def\edf{\coloneq}
\def\Setminus{\;\setminus\;}

\begin{document}

\title[An extension theorem for bundles]{An extension theorem for bundles with respect to   strictly pseudoconvex extensions}

\author{Andrei Teleman}
\address{Aix Marseille Univ, CNRS, I2M, Marseille, France.}
\email[Andrei Teleman]{andrei.teleman@univ-amu.fr}
 
\begin{abstract}

Let  ${\cal X}$ be a complex manifold and $X$ a compact complex manifold with boundary in ${\cal X}$. For a complex Lie group $G$ and a regularity class 
$$\rg\in \big\{{\cal C}^k|\ k\in\N\cup\{\infty\}\big\}\cup\big\{\Lambda^r_{\rm loc}|\ r\in (0,\infty)\big\} $$
we define the sheaf of groups ${\cal O}^{\rg\,G}_X$ on $X$ by
\begin{align*}
{\cal O}^{\rg\,G}_X(V)\edf \{u\in {\cal C}(V,G)|\ &u \hbox{ has regularity class $\rg$ on $V$},
\\ 
&u|_{V\cap\inte(X)} \hbox{ is holomorphic}\}.
\end{align*}
Let  $X_0\Subset Z_0\Subset {\cal X}$ be a strictly pseudoconvex extension in ${\cal X}$ \cite{HL}, and let $X\edf \bar X_0$, $Z\edf \bar Z_0$ be the corresponding compact manifolds with boundary in ${\cal X}$. Let $K\subset X_0$ be a compact set and ${\cal P}$   a holomorphic principal $G$-bundle of class $\rg$ (i.e. an ${\cal O}^{\rg\,G}_X$-torsor) on   $X\setminus K$.  Assuming that \vspace{1mm}
\\
 (H1) the given  strictly pseudoconvex  extension $X_0\Subset Z_0$ is non-critical, 
 \vspace{1mm}\\
 or that
 \vspace{1mm}\\
(H2) the underlying topological bundle of ${\cal P}$ extends to $Z\setminus K$,
 \vspace{1mm}\\ 
we prove that   ${\cal P}$ admits an extension to $Z\setminus K$. 

This gives a new proof and a new generalisation of Donaldson's extension problem stated in \cite{Do} and studied with different methods in \cite{Te}.
 
 \end{abstract}

 \subjclass[2020]{32L05, 32T15}

\maketitle

\tableofcontents

 \section*{Acknowledgements}
 I am grateful to Franc Forstnerič for explaining me his results on the Oka principle on complex manifolds with boundary and for a very useful exchange of mails concerning bundles of Hölder-Zygmund class $\Lambda^r$. I also thank Matei Toma for many stimulating discussions on the subject.

\section{Introduction}

\subsection{Motivation and scope}

Let $D\Subset\C^n$ be smoothly bounded  domain $\C^n$. Endowing the closure $\bar D$ with the sheaf of rings ${\cal A}^\infty_{\bar D}$ defined by
$$
{\cal A}^\infty_{\bar D}(V)=\{f\in {\cal C}^\infty(V,\C)|\ f|_{V\cap D} \hbox{ is holomorphic}\}
$$
for relatively open subsets $V\subset \bar D$, we obtain a locally ringed space $(\bar D,{\cal A}^\infty_{\bar D})$. 

In \cite{Do} S. Donaldson stated an interesting extension problem for bundles on complex manifolds with boundary which can be formulated as follows (see \cite[p. 102]{Do}, \cite[Remark 1.4]{Te}):
\newtheorem*{DonPb}{Extension Problem}
\begin{DonPb}
Suppose that $\bar D$ is strictly pseudoconvex and let ${\cal F}$ be a finite rank locally free  ${\cal A}^\infty_{\bar D}$-module on $\bar D$. Prove that ${\cal F}$ extends to a holomorphic bundle on an open neighbourhood of $\bar D$ in $\C^n$ in the following sense: there exists a  holomorphic bundle $E$ on  an open neighbourhood $U$ of $\bar D$ in $\C^n$ such that ${\cal F}$ is isomorphic to the ${\cal A}^\infty_{\bar D}$-module 
$$
V\mapsto \{f\in {\cal C}^\infty(V,E)|\ f|_{V\cap D} \hbox{ is holomorphic}\}.
$$
\end{DonPb}
If the underlying topological vector bundle of ${\cal F}$ is trivial, it follows by the classical Grauert's classification  theorem for holomorphic bundles on open Stein manifolds \cite{Gr}, that ${\cal F}$ is globally free on $\bar D$. Combining this result with the  
 Kobayashi-Hitchin correspondence for compact complex manifolds with boundary (\cite[Theorem 1']{Do}), Donaldson gives a natural identification between the quotient 
$$
\qmod{{\cal C}^\infty(\partial\bar D,\GL(p,\C))}{\im\big(A^\infty(\bar D,\GL(p,\C))\to {\cal C}^\infty(\partial\bar D,\GL(p,\C))}
$$
and the moduli space of boundary framed Hermite-Einstein unitary connections on topologically trivial vector bundles of rank $p$ over $\bar D$. This identification can be viewed as a higher dimensional version of a fundamental factorisation theorem in loop theory.

Donaldson gives an elegant proof of his extension problem  for the case $n=2$ in  \cite[Appendix A]{Do} mentioning that it ``is almost certainly true in general''.
In \cite{Te} we gave a positive answer and we obtained a broad generalisation of this problem using the following strategy: ${\cal F}$  can be identified with the sheaf of local sections in a ${\cal C}^\infty$-bundle vector bundle $F$ over $\bar D$ which are {\it formally} holomorphic (i.e. are solutions of the equation $\delta\sigma=0$) with respect to a {\it formally} integrable Dolbeault operator $\delta$ (semi-connection) on $F$ (see \cite[p. 306] {Te} for the terminology used in this paragraph). It suffices to prove that the pair $(F,\delta)$ admits an extension $(F',\delta')$ to an open neighbourhood $U$ of $\bar D$ in $\C^N$ with $\delta'$ integrable.  

More generally one can consider:

\begin{itemize}
\item An open submanifold  $X_0$ of a complex manifold $\Omega$  whose closure $X\edf\bar X_0$ is a submanifold with smooth, strictly pseudoconvex boundary.
\item A complex Lie group $G$, a principal bundle $G$-bundle $\Pi$ of class ${\cal C}^\infty$ on $\Omega$ and a formally integrable bundle almost complex structure $J$ on the restriction $P\edf\Pi|_{X}$.
\end{itemize}
 Then, according to \cite[Theorem 1.1]{Te}, $J$ extends to an integrable bundle almost complex structure $J'$ on  $\Pi|_{\Omega'}$, where $\Omega'$ is an open neighbourhood of $X$ in $\Omega$.  The proof, based on Zorn's Lemma and inspired by the proof of Hill-Nacinovich's collar neighbourhood theorem (\cite{HiNa}, \cite{HiNafix}) does not need compactness of $X$. 
 
 The goal of this article is a new generalisation of Donaldson's extension problem, which does require compactness of $X$, but, compared to \cite[Theorem 1.1]{Te}, has the following advantages:
 \begin{itemize}
 \item Gives control on the ``size''  of an open neighbourhood $\Omega'$ to which $J$ extends as an integrable bundle almost complex structure. 	
 \item Applies to bundles of Hölder-Zygmund class $\Lambda^r_{\rm loc}$ for any $r\in(0,\infty)$.
 \item It generalises to bundles  defined only on ``the inner side'' of the boundary $\partial X$, i.e. it applies to a bundle $\Pi$ on $\Omega\setminus K$ and a formally integrable bundle almost complex structure $J$ on $\Pi|_{X\setminus K}$, where $K$ is an arbitrary compact subset of $X_0$.
 \item Specialised to the classical case $G=\GL(p,\C)$, it yields an extension theorem for sheaves of modules which are locally free only around the boundary $\partial X$, but only coherent (in classical complex geometric sense) on $X_0$.
 \end{itemize}
 
 Note that the Hölder-Zygmund class $\Lambda^r$ plays a fundamental role in  the theory of elliptic systems (see for instance \cite[section 4.3.4]{Tr1}), the $\bar\partial$-equation and the $\bar\partial$-Neumann problem (\cite{GrSt}, \cite{BGS}) and also in the theory of the Bergman projection \cite{PhSt}. Bundles of  Hölder-Zygmund class $\Lambda^r$ on complaex manifolds with boundary intervene in the very recent article  of Forstnerič \cite{Fo} dedicated to the Oka principle for this class of  bundles and also in a new joint research project  \cite{TeTo} dedicated to moduli spaces of boundary framed torsion free sheaves on compact complex manifolds with boundary.
 
The formalism used in \cite{Te} is differential geometric: holomorphic principal bundles on complex manifolds (with boundary) are regarded as differentiable principal bundles endowed with (formally) integrable bundle almost complex structures. This formalism is well adapted when all considered objects are differentiable of class ${\cal C}^\infty$ (or more generally differentiable of class ${\cal C}^k$), but is not optimal when one deals with bundles of Hölder-Zygmund class. 

 Throughout this article we will adopt a different formalism, namely we will use the well known and increasingly popular equivalence between principal bundles and torsors. A topological (differentiable of class ${\cal C}^k$, holomorphic) principal $G$-bundle over a topological space (differentiable manifold, complex manifold) $X$ can be viewed as a torsor over the sheaf of groups  of locally defined continuous (respectively differentiable of class ${\cal C}^k$, holomorphic) $G$-valued maps on $X$.
 
 Similarly, ler $X$ be a complex manifold with boundary. For an arbitrary regularity class $\rg$ (e.g. ${\cal C}^k$ or $\Lambda^r_{\rm loc}$) we define the sheaf of groups ${\cal O}^{\rg\,G}_X$ of $X$ by
 $$
 {\cal O}^{\rg\,G}_X(V)\edf\{u\in {\cal C}(V,G)|\ u \hbox{ has regularity class $\rg$ on $V$},\ u|_{V\cap\inte(X)} \hbox{ is holomorphic}\}.
 $$
 If $G=\C$ we will of course omit the superscript $G$ and we obtain a locally ringed space $(X,{\cal O}^{\rg}_X)$. For $\rg={\cal C}^k$ and $G=\C$ one obtains the sheaf ${\cal A}^k_{\bar X}$ introduced in the vast literature dedicated to sheaf theory on complex manifolds with boundary (see e.g. \cite{Lei}, \cite{Heu}, \cite{Se}, \cite{DrFo}).
 
 The data of a locally free ${\cal O}^{\rg}_X$-module ${\cal F}$ of rank $p$ on $X$ is equivalent to the data of an ${\cal O}^{\rg\,\GL(p,\C)}_X$-torsor: the torsor associated with ${\cal F}$ is the sheaf 
 $$
 V\mapsto \mathrm{Iso}_{{\cal O}^\rg_V}({\cal O}^{\rg\oplus p}_V,{\cal F}|_V)
 $$ 
 of locally defined isomorphisms of ${\cal O}^{\rg}_X$-modules between the free ${\cal O}^{\rg}_X$-module  ${\cal O}^{\rg\oplus p}_X$ and ${\cal F}$. Donaldson's original extension problem, as we have formulated above, states that, for $\rg={\cal C}^\infty$ (and under the specified assumptions), any ${\cal O}^{\rg\,\GL(p,\C)}_{\bar D}$-torsor can be extended to an ${\cal O}^{\rg\,\GL(p,\C)}_{U}$-torsor on an open neighbourhood $U$ of $\bar D$ in $\C^n$.
 
 This shows that the formalism of torsors is very convenient for studying and generalizing such extension problems.   In the original problem one just replaces 
 \begin{itemize}
 \item $\bar D$ by a compact manifold with boundary $X$ in a complex manifold ${\cal X}$,
 \item $\GL(p,\C)$ by an arbitrary complex Lie group $G$, and 	
 \item the initial regularity class ${\cal C}^\infty$ considered in \cite{Do} by a more general regularity class $\rg$.
 \end{itemize} 
 
 Note that, in order to give a rigourous sense to the notion ``extension'' which intervenes in our generalised extension problems, we need a formal definition of the notion ``restriction'' in the framework of torsors. More precisely, if $\Omega$   is an open  (or compact with smooth boundary) neighbourhood of $X$ in ${\cal X}$, and ${\cal P}$ is an  ${\cal O}^{\rg\,G}_\Omega$-torsor, the sheaf theoretical restriction ${\cal P}|_X$ is an $({\cal O}^{\rg\,G}_\Omega)|_X$-torsor. The restriction we need is the canonically associated  ${\cal O}^{\rg\,G}_X$-torsor  obtained using the ``group sheaf change'' functor introduced in  section \ref{sheaf-change-section}.
 
 \subsection{Main results}
 
 Let ${\cal X}$ be a complex manifold and $X_0\Subset Z_0\Subset {\cal X}$  be strictly pseudoconvex extension in ${\cal X}$ (see \cite[Definition 2.6]{HL}), and let $X\edf \bar X_0$, $Z\edf \bar Z_0$  the corresponding compact complex manifolds with boundary in ${\cal X}$.  Let  $K\subset X_0$ be a compact set,   $G$ be a complex Lie group, and 
 $$\rg\in\big\{{\cal C}^k|\ k\in\N\cup\{\infty\}\big\}\cup\big\{\Lambda^r_{\rm loc}|\ r\in (0,\infty)\big\}$$
one of the regularity classes introduced in section \ref{section-bdls-Lambda}.

\begin{thry}\label{mainTh}
Let ${\cal P}$ be a holomorphic principal $G$-bundle  of class $\rg$ on $X\setminus K$. 
\begin{enumerate}
	\item Suppose that  $X_0\Subset Z_0$ is a non-critical strictly pseudoconvex extension in ${\cal X}$ (\cite[Definition 2.1]{HL}). Then ${\cal P}$ is the restriction  of a principal $G$-bundle of class $\rg$ on $Z\setminus K$. 
	\item Suppose that $X_0\Subset Z_0$ is just a strictly pseudoconvex extension in ${\cal X}$ (\cite[Definition 2.6]{HL}) and that the underlying topological bundle $P$ of ${\cal P}$ is isomorphic to the restriction of a topological bundle  $P'$ defined on $Z\setminus K$. Then ${\cal P}$ is the restriction  of a principal $G$-bundle of class $\rg$ on $Z\setminus K$ whose underlying topological bundle is isomorphic to $P'$. 
\end{enumerate}
\end{thry}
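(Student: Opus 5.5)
Following Henkin--Leiterer \cite{HL}, a strictly pseudoconvex extension $X_0\Subset Z_0$ decomposes into finitely many elementary steps
$$X_0=Y_0\Subset Y_1\Subset\dots\Subset Y_m=Z_0 .$$
Each step $Y_{j}\Subset Y_{j+1}$ is obtained by adding a small ``bump''. Concretely, $Y_{j+1}=Y_j\cup B_j$, where $B_j$ is a strictly pseudoconvex set contained in a coordinate chart, and $Y_j\cap B_j$ is also strictly pseudoconvex (convex after a holomorphic change of coordinates). In the non-critical case all steps are of this non-critical bump type. In the general case one also meets finitely many critical steps (passing a critical point of the exhaustion function), where the topology of $Y_j\setminus K$ may change.

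Since $K\subset X_0$ is compact and the bumps can be chosen small, they lie near the boundary and away from $K$. By induction it therefore suffices to extend an ${\cal O}^{\rg\,G}$-torsor across a single step, from $\bar Y_j\setminus K$ to $\bar Y_{j+1}\setminus K$. Restriction is understood via the group sheaf change functor of section \ref{sheaf-change-section}, which is compatible with composition, so successive extensions glue to the required restriction.

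For one non-critical bump I proceed in three moves.

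\emph{Trivialise near the bump.} Set $C=\bar Y_j\cap\bar B_j$; this is a convex, strictly pseudoconvex compact set with boundary, contractible. By an Oka principle for bundles of class $\rg$ on such sets (Forstnerič \cite{Fo}, or a Grauert-type argument on contractible Stein compacta with boundary), ${\cal P}|_{C}$ is trivial, i.e. it has a section $s$ of class $\rg$ on $C$.

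\emph{Extend the trivialisation.} The key is a Cartan-type splitting on the thickened intersection. Take $s'$ the trivial section on $\bar B_j$. The given torsor admits a local section of class $\rg$ on a neighbourhood $W$ of $C$ relative to $\bar Y_j$. Comparing with $s$ on a slightly shrunk set gives a transition $g\in{\cal O}^{\rg\,G}$ near $C\cap\partial Y_j$.

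\emph{Split and glue.} I use a Cartan lemma with loss-free $\rg$-estimates, proved by solving $\bar\partial$ with Hölder--Zygmund estimates on strictly pseudoconvex domains and applying a Newton/implicit function iteration. It writes $g=a\cdot b^{-1}$ with $a$ holomorphic of class $\rg$ on the $\bar Y_j$ side and $b$ on the $\bar B_j$ side. Gluing ${\cal P}$ over $\bar Y_j\setminus K$ with the trivial torsor over $\bar B_j$ via the corrected cocycle then gives a torsor on $\bar Y_{j+1}\setminus K$ restricting to ${\cal P}$. For $G$ non-linear I handle the splitting by working near the identity in exponential coordinates after approximating $g$ by holomorphic maps on a neighbourhood (Runge/Oka–Weil on the convex set $C$). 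This part, a $\rg$-regular nonlinear Cartan splitting for arbitrary complex Lie groups $G$ including all $\Lambda^r$ classes up to the boundary, is the main obstacle. I would first establish it for $g$ close to $1$ via $\bar\partial$-estimates, then reduce to that case by approximation.

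For (2), critical steps are treated as follows. Passing a critical point attaches a totally real handle $Y_{j+1}\simeq Y_j\cup$ (handle). The hypothesis that $P$ extends topologically to $P'$ on $Z\setminus K$ lets me extend the topological bundle across the handle. The task is then to make this extension holomorphic near $\bar Y_j\cup$ (core disc). By the Oka principle (Grauert, in the form of \cite{Fo}), a topological $G$-bundle over a Stein neighbourhood of $\bar Y_j\cup$ (core) which is holomorphic of class $\rg$ near $\bar Y_j$ can be deformed to a holomorphic one agreeing with ${\cal P}$ up to isomorphism near $\bar Y_j$. After that, the remaining passage to $Y_{j+1}$ is non-critical and is handled as above. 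Topological compatibility with $P'$ holds throughout because every modification is a homotopy of cocycles.
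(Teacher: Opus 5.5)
Your reduction to finitely many bumps lying away from $K$ is fine, and so is the first move of your bump step: ${\cal P}$ is trivial on the convex, contractible set $C=\bar Y_j\cap\bar B_j$ by the Oka principle. From there on, however, part (1) rests on a nonlinear $\rg$-Cartan splitting for arbitrary $G$ and all $\Lambda^r$ classes. You do not prove it, and you yourself call it the main obstacle.

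The missing observation is that no splitting is needed. You are extending a torsor, not a section, so \emph{any} trivialisation on the overlap is admissible gluing data. For a bump $[D_-,U,D_+]$ put $V_0=\inte_{\bar D_+}(\bar D_-)$ and $V_1=\inte_{\bar D_+}(\bar U)$. These relatively open sets cover $\bar D_+$ precisely because of the separation property $\overline{D_-\setminus U}\cap\overline{U\setminus D_-}=\emptyset$ of a bump. Moreover $V_0\cap V_1=\inte_{\bar D_+}(\overline{D_-\cap U})$ by Lemma~\ref{generalized-bump-lemma}. Let $\sigma$ be the trivialisation of ${\cal P}|_{\overline{D_-\cap U}}$ given by Proposition~\ref{Oka}, and glue ${\cal P}|_{V_0}$ to the trivial torsor on $V_1$ along $\sigma|_{V_0\cap V_1}$ (Corollary~\ref{general-extension}). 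Since $\sigma$ is defined on the whole closed set $\overline{D_-\cap U}$, boundary points included, the glued torsor restricts to ${\cal P}$ on all of $\bar D_-$; use the cover $V_0$, $\inte_{\bar D_-}(\bar D_-\cap\bar U)$ of $\bar D_-$. It likewise restricts to the trivial torsor on $\bar U$. That is the entire bump step, and the transition $g$ with its factorisation can simply be dropped.

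Part (2) has a genuine gap. You apply an Oka principle on ``a Stein neighbourhood of $\bar Y_j\cup M$'' ($M$ the core of the handle), but there are three problems:
\begin{enumerate}
\item The bundles involved live only off $K$, so the relevant domain is such a neighbourhood with $K$ removed. That is in general not Stein (Hartogs, once $\dim_{\C}{\cal X}\ge 2$ and $K\neq\emptyset$).
\item Even for $K=\emptyset$, $\bar Y_j$ need not have Stein neighbourhoods. ${\cal X}$ is an arbitrary complex manifold, and a strictly pseudoconvex domain in it may contain compact analytic subvarieties.
\item You take the bundle to be ``holomorphic of class $\rg$ near $\bar Y_j$'', i.e.\ already defined beyond $\partial Y_j$, which is a local form of the statement being proved. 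The relative result you need (deform a topological bundle on $\bar Y_j\cup M$ to a holomorphic one keeping ${\cal P}$ fixed, with regularity $\rg$ up to $\partial Y_j$) is not Proposition~\ref{Oka}.
\end{enumerate}

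The paper never meets handles. Henkin--Leiterer's Corollary 2.8 decomposes an arbitrary strictly pseudoconvex extension into special pseudoconvex bumps $[D_j,U_j,D_{j+1}]$ with $\bar U_j\cap K=\emptyset$, $U_j$ starshaped in a chart, and $D_j\cap U_j$ a (possibly disconnected) special strictly pseudoconvex domain in $\C^n$. The change of topology is thus absorbed into the intersection. Since $\bar U_j$ is contractible, $P'|_{\bar U_j}$ is trivial and induces a topological trivialisation $t$ of $P|_{\overline{D_j\cap U_j}}$. Proposition~\ref{Oka}, applied inside $\C^n$, gives a holomorphic trivialisation $s$ homotopic to $t$. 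Gluing as above yields the extension, and $s\simeq t$ on the overlap shows its underlying topological bundle is isomorphic to $P'$. Every use of the Oka principle is local in a chart, which is what makes both $K$ and the non-Stein ambient manifold harmless.
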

\begin{re} 
In particular, if the assumptions of (1) or (2) are satisfied, then ${\cal P}$ is   the restriction of 	a holomorphic principal $G$-bundle on the complex manifold (without boundary) $Z_0 \setminus K$.
\end{re}

\begin{re}
In the special case $K=\emptyset$ we obtain very simple extension properties for  a bundle ${\cal P}$ on $X$ defined on the closure $X$ of the smaller term  of a non-critical (or general) strictly pseudoconvex extension $X_0\Subset Z_0$.  	
\end{re}

In these statements we have used the notion ``restriction" explained formally in section \ref{section-bdls-Lambda} (see  Remark \ref{restrict-r-bdl-rem}, Definition \ref{restrict-r-bdl-def}).  
Taking $G=\GL(p,\C)$ and $K$ the  singularity set  $B({\cal F})$ of ${\cal F}$ (see \cite[p. 92]{GrRe}), we obtain:

\begin{co}
Let ${\cal F}$ be an ${\cal O}^{\rg}_{X}$-module on $X$	which is locally free of finite rank $p$ in a neighbourhood of $\partial X$ and whose restriction to $X_0$ is coherent. Suppose that 
\begin{tabular}{ll}
(H1)&\hspace{-3mm}The pseudoconvex extension $X_0\Subset Z_0$ is non-critical,
\\
&or that
\\
(H2)&\hspace{-3mm}The underlying topological bundle of the rank $p$  vector bundle defined by ${\cal F}$  \\ 
& on $X\setminus B({\cal F})$  extends to $Z\setminus B({\cal F})$. 
\end{tabular}

 Then ${\cal F}$ extends to a coherent  sheaf $\tilde {\cal F}$ on $Z_0$ with  $B(\tilde{\cal F})= B({\cal F})$.
\end{co}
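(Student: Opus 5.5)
Here is the plan: deduce the corollary from Theorem \ref{mainTh} applied to the frame torsor of ${\cal F}$, then glue the resulting bundle to the coherent sheaf ${\cal F}|_{X_0}$. Put $K\edf B({\cal F})$.

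\emph{Step 1: $K$ is a compact subset of $X_0$.} Since ${\cal F}|_{X_0}$ is coherent, $B({\cal F})\cap X_0$ is a closed analytic subset of $X_0$. Since ${\cal F}$ is locally free near $\partial X$, this set avoids a neighbourhood of $\partial X$ in $X$. Hence $K$ is closed in the compact space $X$ and contained in $X_0$, so it is compact.

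\emph{Step 2: extend the frame torsor.} On $X\setminus K$ the sheaf ${\cal F}$ is a locally free ${\cal O}^\rg$-module of rank $p$. Indeed, near $\partial X$ this is the hypothesis, and on $X_0\setminus K$ it holds because a coherent sheaf is locally free off its singularity set. Let ${\cal P}$ be the associated ${\cal O}^{\rg\,\GL(p,\C)}_{X\setminus K}$-torsor, $V\mapsto \mathrm{Iso}({\cal O}^{\rg\oplus p}_V,{\cal F}|_V)$. On $X_0\setminus K$ the interior regularity is irrelevant, since holomorphic maps are smooth. Under (H1), Theorem \ref{mainTh}(1) applies directly. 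Under (H2), Theorem \ref{mainTh}(2) applies, since the underlying topological $\GL(p,\C)$-bundle is the frame bundle of the given topological vector bundle. In either case we obtain a torsor ${\cal P}'$ of class $\rg$ on $Z\setminus K$ restricting to ${\cal P}$. In particular ${\cal P}'$ gives a holomorphic rank $p$ vector bundle $E$ on $Z_0\setminus K$ whose sheaf of sections ${\cal E}$ satisfies ${\cal E}|_{X_0\setminus K}\cong{\cal F}|_{X_0\setminus K}$ via some isomorphism $\phi$.

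\emph{Step 3: glue.} Define $\tilde{\cal F}$ on $Z_0=X_0\cup(Z_0\setminus K)$ by gluing ${\cal F}|_{X_0}$ and ${\cal E}$ along $\phi$ over the overlap $X_0\setminus K$. Only two open sets are involved, so there is no cocycle condition to check. The result is an ${\cal O}_{Z_0}$-module. It is coherent because coherence is local and each piece is coherent.

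\emph{Step 4: $B(\tilde{\cal F})=B({\cal F})$.} The sheaf $\tilde{\cal F}$ is locally free on $Z_0\setminus K$. On $X_0$ it coincides with ${\cal F}$, whose non-locally-free locus is exactly $K$. Hence $B(\tilde{\cal F})=K$.

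The hard part is Theorem \ref{mainTh} itself, which is assumed here. Within the corollary, the main point to handle carefully is Step 2's compatibility: the notion of ``restriction'' for ${\cal O}^\rg$-torsors (Definition \ref{restrict-r-bdl-def}) must agree on $X_0\setminus K$ with ordinary restriction of holomorphic bundles. This holds because on the interior ${\cal O}^{\rg\,G}$ coincides with ${\cal O}^G$. The identification of ${\cal F}$ near $\partial X$ with the sheaf of sections of the restricted torsor then follows from the torsor/locally free module equivalence recalled in the introduction.
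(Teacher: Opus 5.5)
Your proposal is correct and follows the paper's route: the paper gets the corollary in one line, by applying Theorem~\ref{mainTh} with $G=\GL(p,\C)$ and $K=B({\cal F})$ to the frame torsor of ${\cal F}$ on $X\setminus B({\cal F})$. Your Steps 1, 3 and 4 make explicit what the paper leaves implicit: that $B({\cal F})$ is a compact subset of $X_0$; that the holomorphic bundle on $Z_0\setminus K$ is glued to ${\cal F}|_{X_0}$ along the isomorphism induced by ${\cal P}'|_{X\setminus K}\simeq{\cal P}$ (take $\phi$ to be exactly this induced isomorphism, so that $\tilde{\cal F}$ restricts to ${\cal F}$ on all of $X$); and the identification $B(\tilde{\cal F})=B({\cal F})$.
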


For the proof of Theorem \ref{mainTh} we use a powerful well known  tool in complex geometry, namely the technique of convex bumps as explained for instance in \cite{HL}. In the presence of a non-critical  strictly pseudoconvex extension $X_0\Subset Z_0\Subset {\cal X}$ in ${\cal X}$, this technique allows one to pass from $X=\bar X_0$ to $Z=\bar Z_0$ by a finite number of convex  bumps  (see \cite[Lemma 2.2]{HL}). For a general strictly pseudoconvex extension one needs a sequence of special pseudoconvex bumps (see \cite[Corollary 2.8]{HL}).
Then we use  Propositions \ref{ExtBdlOnBump1}, \ref{ExtBdlOnBump2} which give  general extension properties for  bumps.    Let  $[D_-,U,D_+]$ be a special  pseudoconvex bump in ${\cal X}$ and ${\cal P}$  a  holomorphic principal $G$-bundle of class $\rg$ on $\bar D_-$. 

Assume that first that the bump is convex. Then ${\cal P}$ admits an extension ${\cal Q}$ to $\bar D_+$ which is trivial on $\bar U$. 

Second, assume that $[D_-,U,D_+]$ is a general special pseudoconvex bump and the underlying topological bundle $P$ of ${\cal P}$ admits an extension $P'$ to $D_+$. Then ${\cal P}$ admits an extension ${\cal Q}$ to $\bar D_+$ which is trivial on $\bar U$ and which is topologically isomorphic to $P'$. 
 
 We have similar statements for a bundle ${\cal P}$ defined only on $\bar D_-\setminus K$, where $K\subset D_-\setminus \bar U$ is a compact set.
 The main ingredient which comes in the proof of these propositions is the Oka principle for holomorphic bundles of class $\rg$ on strictly pseudoconvex compact manifolds with boundary in Stein manifolds (\cite{DrFo}, \cite{Fo}). We state the precise statement we need in Proposition \ref{Oka}. 
\section{Preliminaries}

\subsection{Torsor over a  sheaf of groups and principal bundles with structure sheaf}

Let $X$ be a topological space and ${\cal G}$ a sheaf of groups on $X$. 
\begin{dt}\label{def-torsor}
A right ${\cal G}$-torsor is  a sheaf of sets ${\cal T}$ on $X$ endowed with a right group sheaf action ${\cal T}\times{\cal G}\to{\cal T}$ satisfying:
\begin{enumerate}[(To1)]
\item \label{To1} For any open set $U\subset X$, the induced right action 
$$
 {\cal T}(U)\times {\cal G}(U)\to {\cal T}(U)
$$
is free and transitive.
\item \label{To2} Any point $x\in X$ has an open neighbourhood $U$ for which ${\cal T}(U)\ne\emptyset$. 
\end{enumerate}	

A right ${\cal G}$-torsor ${\cal T}$ will be called trivial if it admis a global section, i.e. if it is isomorphic to ${\cal G}$ as a right ${\cal G}$-torsor.
\end{dt}

\begin{re} Note that: 
\begin{enumerate}[(1)]
\item Condition (To\ref{To1}) is obviously satisfied when ${\cal T}(U)=\emptyset$.
\item  For any open set $U$ with ${\cal T}(U)\ne\emptyset$, the set ${\cal T}(U)$ becomes a right ${\cal G}(U)$-torsor in set theoretical sense.
\item Condition (To\ref{To2}) is equivalent to:  for any point $x\in X$, the stalk ${\cal T}_x$ of ${\cal T}$ at $x$ is non-empty. Taking into account (To\ref{To1}), it follows that the stalk  ${\cal T}_x$ of a right ${\cal G}$-torsor becomes a  right ${\cal G}_x$-torsor in set theoretical sense.
\end{enumerate}	
\end{re}

Using the formalism of étale spaces \cite[p. 111]{God} and denoting by $\mathscr{G}\to X$ the étale space of ${\cal G}$, we can equivalently define
\begin{dt}\label{def-torsor-etale}
A ${\cal G}$-torsor is an étale space $\mathscr{T}\to X$ with non-empty fibres over $X$ endowed with a continuos map
$$
\mathscr{T}\times_X\mathscr{G}\to \mathscr{T}
$$	
over $X$ such that 
\begin{enumerate}
\item For any $x\in X$ the induced map $\mathscr{T}_x\times\mathscr{G}_x\to\mathscr{T}_x$ between fibres is a free and transitive $\mathscr{G}_x$-action. 
\\
For $x\in X$ and $\tau$, $\tau'\in \mathscr{T}_x$ denote by  $\tau^{-1}\tau'$ the unique element $\gamma\in\mathscr{G}_x$ satisfying $\tau\gamma=\tau'$. 
\item The map 
$$\mathscr{T}\times_X\mathscr{T}\to \mathscr{G},\ (\tau,\tau')\mapsto \tau^{-1}\tau'$$
is continuous.
\end{enumerate}
\end{dt}

Let now $X$ be a topological space and ${\cal C}^G_X$ be the sheaf of continuous locally defined $G$-valued  maps on $X$. ${\cal C}^G_X$ is naturally a sheaf of groups on $X$. 

\begin{dt}\label{calG-bdl}
Let ${\cal G}\subset {\cal C}^G_X$ be a subsheaf of subgroups of ${\cal C}^G_X$. A principal ${\cal G}$-bundle on $X$ is a right ${\cal G}$-torsor ${\cal T}$ on $X$ in the sense of Definition \ref{def-torsor}. A  principal ${\cal G}$-bundle ${\cal T}$ on $X$ will be called trivial if it is trivial as a ${\cal G}$-torsor. 

\end{dt}

Let ${\cal T}$ be a principal ${\cal G}$-bundle on $X$ in the sense of Definition \ref{calG-bdl}. For a local section $\sigma$ of ${\cal T}$ we will denote by $U_\sigma$ its domain, i.e. the open set of $X$ for which $\sigma\in {\cal T}(U_\sigma)$. 
For local sections $\sigma$, $\tau$  of ${\cal T}$ let $g_{\tau\sigma}\in {\cal G}(U_\sigma\cap U_\tau)\subset {\cal C}^G_X(U_\sigma\cap U_\tau)$  be defined by the condition
$$
\sigma|_{U_\sigma\cap U_\tau}=\tau|_{U_\sigma\cap U_\tau} g_{\tau\sigma}.
$$
For local sections $\theta$, $\tau$, $\sigma$ we have the identity
$
g_{\theta\tau}g_{\tau\sigma}=g_{\theta\sigma} \hbox { on }U_\sigma\cap U_\tau\cap U_\theta$,
so the system 
\begin{equation}\label{cocyle}
(g_{\tau\sigma})_{\substack{\sigma,\;\tau\;\rm  local\\ \rm sect.\, of\, {\cal T} } }	
\end{equation}
\vspace{-1mm}\\
is a ${\cal G}$-valued Cech  1-cocycle for the open cover 
$(U_\sigma)_{\substack{\sigma\;\rm  local\\ \rm sect.\, of\, {\cal T} } }$
 of $X$.
The {\it total space} of ${\cal T}$ is the right $G$-space $P_{\cal T}$ over $X$ which is associated with the cocycle  (\ref{cocyle}), i.e. 
$$
P_{\cal T}\edf \bigg(\coprod_
{\substack{\sigma\;\rm  local\\ \rm sect.\ of\ {\cal T} } }   \{\sigma\}\times U_\sigma\times G\bigg)/ {\sim}_{\cal T} \ ,
$$
where $\sim_{\cal T}$ is the equivalence relation generated by the pairs
$$
\big((\sigma,x,\gamma),(\tau,x,g_{\tau\sigma}\gamma)\big),\  x\in U_\sigma\cap U_\tau.
$$

\begin{exs}\label{ex-torsors-bundles}
Let $X$ be a topological space. The data of a topological principal $G$-bundle on $X$ in the classical sense is equivalent to the data of a principal ${\cal C}^G_X$-bundle in the sense of Definition \ref{calG-bdl}.   

Let $X$ be a differentiable (complex) manifold, ${\cal C}^{k\,G}_X$ (\;${\cal O}_X^G$\;) the sheaf of locally defined $G$-valued maps of class ${\cal C}^k$ (respectively holomorphic $G$-valued maps, where $G$ is a complex Lie group).  The data of a principal $G$-bundle of class ${\cal C}^k$ (of a holomorphic principal $G$-bundle) on $X$ in the classical sense is equivalent to the data of a principal ${\cal C}^{k\,G}_X$-bundle (respectively ${\cal O}^G_X$-bundle) in the sense of  Definition \ref{calG-bdl}.

In each case we have an equivalence of groupoids given by the following functors:
\begin{itemize}
	\item The functor which assigns to a (topological, of class ${\cal C}^k$, holomorphic) principal $G$-bundle  in the classical sense its sheaf of local sections. 
	\item The functor which assigns to a principal ${\cal C}^G_X$ (${\cal C}^{k\,G}_X$, ${\cal O}_X^G$)-bundle ${\cal T}$ in the sense of Definition \ref{calG-bdl} the principal $G$-bundle $P_{\cal T}\to X$ endowed with its canonical topological (differentiable of class ${\cal C}^k$, holomorphic) structure.
\end{itemize}

\end{exs}

We are especially interested in principal ${\cal G}$-bundles, where the structure sheaf ${\cal G}$ is obtained from a sheaf of locally defined $G$-valued maps on the base by imposing a  condition which depends on the point; in these cases we do not have a bundle theory ``in the classical sense". 

\begin{ex} Let $X$ be a topological space, $F\subset X$ a closed set, and ${\cal C}^G_{X,F}$ the sheaf of locally defined $G$-valued continuous maps on $X$ which take the value $e_G$ (the unit element of $G$) on $F$. The data of a principal ${\cal C}^G_{X,F}$-bundle on $X$ is equivalent to the data of a topological principal $G$-bundle on $X$ endowed with a trivialisation (or, equivalently, a section) of its restriction to $F$.
	
\end{ex}

\subsection{Group sheaf change and pull-backs} \label{sheaf-change-section}

Recall first that if $e:G\to G'$ is a group morphism and $T\times G\to T$ a $G$-torsor in the set theoretical sense, then
\begin{equation}\label{group-ext}
T\times_e G'\edf \qmod{T\times G'}{G}
\end{equation}
is naturally a $G'$-torsor. In formula (\ref{group-ext}) $G$ acts freely from the left on $T\times G'$ by 
$$g(t,g')\edf (tg^{-1},e(g)g').$$
Similarly, let $X$ be a topological space, $\eta:{\cal G}\to{\cal G}'$  a morphism of   group sheaves on $X$ and ${\cal T}$ a right ${\cal G}$-torsor. The formula 
\begin{equation}\label{presheaf}
X\stackrel{\scriptscriptstyle\rm open}\supset U\mapsto {\cal T}(U)\times_{\eta_U}{\cal G}'(U)\end{equation}
defines a presheaf of sets on $X$. Note that a section $\tau\in {\cal T}(U)$ defines an isomorphism between the restriction of this presheaf to $U$ and the restriction ${\cal G}'|_U$, in particular the former restriction is already a sheaf. The   sheaf associated with the presheaf (\ref{presheaf}) will be denoted ${\cal T}\times_\eta{\cal G}'$. 
Since inductive limits with respect to filtered posets commute with Cartesian products and quotients by group actions, it follows that the stalk at $x$ of this sheaf is
$$
\big({\cal T}\times_{\eta}{\cal G}'\big)_x={\cal T}_x\times_{\eta_x}{\cal G}'_x.
$$
Endowed with the obvious right ${\cal G}'$-action, ${\cal T}\times_\eta{\cal G}'$ is a ${\cal G}'$-torsor on $X$, which will be called the ${\cal G}'$ torsor associated with ${\cal T}$ by group sheaf change via $\eta$. The assignment 
$$
{\cal T}\mapsto {\cal T}\times_\eta{\cal G}'
$$
defines a functor 
$$F_\eta:\Tors_{\cal G}\to \Tors_{{\cal G}'}$$
 from the groupoid of right ${\cal G}$-torsors to the groupoid of right ${\cal G}'$-torsors.

\begin{ex}
Let $X$ be a complex manifold and $G$ a complex Lie group,  and  $\iota:{\cal O}^G_X\hookrightarrow{\cal C}^{G}_X$, $\iota^k:{\cal O}^G_X\hookrightarrow{\cal C}^{k\, G}_X$, be the obvious group sheaf monomorphisms (see Example \ref{ex-torsors-bundles}). 
Via the groupoid equivalences explained in Example \ref{ex-torsors-bundles}, the functors  
$$F_\iota:\Tors_{{\cal O}^G_X}\to \Tors_{{\cal C}^G_X},\ F_{\iota^k}:\Tors_{{\cal O}^G_X}\to \Tors_{{\cal C}^{kG_X}}$$
 correspond to the well known functors which assign to a holomorphic principal bundle on $X$ its underlying topological   bundle, respectively its underlying differentiable bundle of class ${\cal C}^k$.
\end{ex}

Let $f:X\to X'$ be a continuous map between topological spaces, ${\cal G}'$ a sheaf of groups on $X'$, and ${\cal T}'$ a ${\cal G}'$-torsor. The inverse image $f^{-1}({\cal T}')$ is naturally an $f^{-1}({\cal G}')$-torsor on $X$.  This follows using the equivalent Definition \ref{def-torsor-etale} and the well known behaviour of the étale space functor under inverse image of sheaves (see for instance \cite[p. 121]{God}, \cite[section II.1B]{De}). 

On the other hand, in the classical theory of (topological, differentiable, holomorphic) principal bundles we have a different notion of pull back. For instance, a the pullback $f^*(P')$ of a topological principal $G$-bundle $P'$ on $X'$ is a topological principal $G$-bundle on $X$, whose associated sheaf of local sections is a ${\cal C}^G_X$-torsor, not an $f^{-1}({\cal C}^G_{X'})$-torsor.

Note that, in the presence of a continuous map $f:X\to X'$, we have a canonical $f$-map of sheaves of groups 
$$c_f: {\cal C}^G_{X'}\to {\cal C}^G_X$$
(see  \cite[\href{https://stacks.math.columbia.edu/tag/008J}{Tag 008J}]{St})  which is induced by family of composition maps
$$
{\cal C}(U',G)\to {\cal C}(f^{-1}(U'),G),\ \alpha\mapsto \alpha\circ f
$$   
associated with open sets   $U'\subset X'$. Recall (see \cite[\href{https://stacks.math.columbia.edu/tag/008K}{Tag 008K}]{St}) that $c_f$ can be viewed as a group sheaf morphism ${\cal C}^G_{X'}\to f_*({\cal C}^G_X)$, and also as a group sheaf morphism $f^{-1}({\cal C}^G_{X'})\to {\cal C}^G_{X}$.
\begin{re} Let $f:X\to X'$ be a continuous map between topological spaces, and let ${\cal G}\subset {\cal C}^G_X$, ${\cal G}'\subset{\cal C}^G_{X'}$ be subsheaves of subgroups on $X$, respectively $X'$.  The following conditions are equivalent:
\begin{enumerate}[(C1)]
\item \label{C1} For any open set $U'\subset X$ and any $\alpha\in {\cal G}'(U')$, we have $\alpha\circ f\in {\cal G}(f^{-1}(U'))$.
\item \label{C2} The sheaf morphism ${\cal C}^G_{X'}\to f_*({\cal C}^G_X)$ induced by $c_f$ maps the subsheaf ${\cal G}'$ of  ${\cal C}^G_{X'}$ into $f_*({\cal G})$.
\item 	\label{C3} The sheaf morphism $f^{-1}({\cal C}^G_{X'})\to {\cal C}^G_{X}$ induced by $c_f$ maps the subsheaf $f^{-1}({\cal G}')$ of  $f^{-1}({\cal C}^G_{X'})$ into ${\cal G}$.
\end{enumerate}	
\end{re}

\begin{dt}\label{pull-back-bundle}
If one of the equivalent conditions (C\ref{C1})-(C\ref{C3}) holds, we will say that   the compatibility condition $C({\cal G},{\cal G}',f)$ is satisfied, and we let   $\eta_f:f^{-1}({\cal G}')\to {\cal G}$ denote the group sheaf morphism induced by $c_f$.

Suppose this is the case, and let ${\cal T}'$ be a principal ${\cal G}'$-bundle on $X'$ in the sense of Definition \ref{calG-bdl}. We define the pull back ${\cal G}$-bundle $f^*({\cal T})$ of  ${\cal T}$ via $f$ by
$$
f^*({\cal T})\edf f^{-1}({\cal T})\times_{\eta_f}{\cal G}
$$	
(see section \ref{sheaf-change-section}).
\end{dt}

\begin{ex}\label{compat-condit-ex1}
Let $k\in\N\cup\{\infty\}$, $f:X\to X'$  a differentiable map of class ${\cal C}^k$ (a holomorphic map)  between differentiable (complex) manifolds, and let ${\cal T}$ be a principal ${\cal C}^{k\,G}_{X'}$ (respectively ${\cal O}^G_{X'}$-bundle) on $X'$ in the sense of Definition \ref{calG-bdl}.

The compatibility condition $C({\cal C}^{k\,G}_{X},{\cal C}^{k\,G}_{X'},f)$ (respectively $C({\cal O}^{G}_{X},{\cal O}^{G}_{X'},f)$) is obviously satisfied, so that the pull-back ${\cal C}^{k\,G}_{X}$-bundle (respectively the pull-back ${\cal O}^{G}_{X}$-bundle) $f^*({\cal T})$ in the sense of Definition \ref{pull-back-bundle} is defined. It corresponds to the  differentiable (holomorphic) pull back bundle $f^*({\cal P}_{\cal T})$ in the classical sense. 

\end{ex}

Whereas the compatibility conditions intervening in  Example \ref{compat-condit-ex1}  are obviously satisfied, the problem becomes more delicate in the  situation considered in the following section.
\subsection{Bundles of class \texorpdfstring{${\cal C}^k$}{1}, \texorpdfstring{$\Lambda^r_{\rm loc}$}{2} on manifolds with boundary}
\label{section-bdls-Lambda}

 We will use the following terminology:

\begin{dt}\label{XincalX}
Let ${\cal X}$ be a differentiable manifold (without boundary). A manifold with  boundary in ${\cal X}$ is an embedded submanifold with boundary  $X\subset {\cal X}$ of the same dimension. \end{dt}

Therefore, if $X$ is a  manifold with boundary in ${\cal X}$, then its interior as a manifold with boundary coincides with its topological interior in ${\cal X}$ denoted $\inte(X)$, so it is an open  submanifold of ${\cal X}$; its boundary $\partial X$  is a real hypersurface of ${\cal X}$ and $X\subset \overline{\inte(X)}$. We have equality $X=\overline{\inte(X)}$ if and only if $X$ is closed in ${\cal X}$.
\begin{re}\label{subman-with-bd}
Let  $X$ be a  manifold with  boundary in ${\cal X}$.
\begin{enumerate}
	\item\label{1st} Any relatively open subset of $X$ is also a manifold with  boundary in ${\cal X}$. More precisely, let ${\cal U}\subset {\cal X}$ an open subset.
Then $X\cap {\cal U}$ is submanifold with boundary in ${\cal U}$, $\partial(X\cap {\cal U})=\partial X\cap {\cal U}$, and $\inte(X\cap {\cal U})=\inte(X)\cap {\cal U}$.
\item \label{2nd} Suppose that $X$ is closed in ${\cal X}$. Then the complement $\complement_{\cal X}(\inte(X))$ is also a  manifold with boundary in ${\cal X}$, and 
$$\partial(\complement_{\cal X}(\inte(X)))=\partial X,\ \inte(\complement_{\cal X}(\inte(X)))=\complement_{\cal X}(X).$$
\end{enumerate}    	
\end{re}

Let $X$ be an $m$-dimensional manifold with boundary in ${\cal X}$, $Y$ an $n$-dimensional manifold with boundary in ${\cal Y}$ and $k\in\N\cup\{\infty\}$. We recall that a map $f:X\to Y$ is differentiable of class ${\cal C}^k$ if it is induced by a  map $\tilde f:{\cal V}\to {\cal Y}$ of class ${\cal C}^k$ with $\tilde f(X)\subset Y$, where ${\cal V}$ is an open neighbourhood of $X$ in ${\cal X}$.

For a Lie group $G$, a manifold with boundary $X$ in ${\cal X}$ and $k\in \N\cup\{\infty\}$ let ${\cal C}^{k\,G}_X$ be the sheaf of groups defined by 
$$
 U \mapsto {\cal C}^{k\,G}_X(U)={\cal C}^k(U,G)
$$ 
for relatively open subsets $U\subset X$.

\begin{dt}\label{Ck-Bundle}
Let $X$ be a manifold with boundary in ${\cal X}$, $k\in\N\cup\{\infty\}$, and $G$  a  Lie group. A  principal $G$-bundle of  class  ${\cal C}^k$ on $X$ is a principal 	${\cal C}^{k\,G}_X$-bundle on $X$ in the sense of Definition \ref{calG-bdl}.	
\end{dt}

Pull back bundles via maps of class ${\cal C}^k$ are also well defined in the framework of manifolds with boundary (compare with Example \ref{compat-condit-ex1}): 

\begin{re} Let $X$, $Y$ be manifolds with boundary in ${\cal X}$, respectively ${\cal Y}$ and $f:X\to Y$ a differentiable map of class ${\cal C}^k$. Then the compatibility condition $C({\cal C}^{k\,G}_X,{\cal C}^{k\,G}_Y,f)$ introduced in Definition \ref{pull-back-bundle} is satisfied. In particular, for any principal $G$-bundle ${\cal T}$ of  class  ${\cal C}^k$ on $Y$, we have a well defined   pull-back principal $G$-bundle  $f^*({\cal T})$  of  class  ${\cal C}^k$  on $X$.
\end{re}

We refer to \cite[Section V.4]{St}, \cite[Section 3.2]{Gon} for the definition and fundamental properties of the Zygmund spaces $\Lambda^r(\R^n)$.
Let $r\in(0,\infty)$. We recall \cite[Definition 3.9]{Gon} that, for a closed set $F\subset\R^n$, the space $\Lambda^r(F,\R^s)$ is defined by 
\begin{equation}\label{DefLambda(F)}
\Lambda^r(F,\R^s)\edf\{u|_F|\ u\in \Lambda^r(\R^n,\R^s)\}
\end{equation}
endowed with the Banach space structure induced via the obvious identification
$$\qmod{\Lambda^r(\R^n,\R^s)}{\{u\in \Lambda^r(\R^n,\R^s)|\ u|_F=0\}}\textmap{\simeq}\Lambda^r(F,\R^s)
$$
by the quotient norm on left hand space.

Let $X$ be a manifold with boundary in ${\cal X}$, $u:X\to \R^s$ a map,  $x\in X$ and let
$${\cal X}\stackrel{\scriptscriptstyle\rm open}\supset V\textmap{h}U\stackrel{\scriptscriptstyle\rm open}\subset\R^n$$
a chart of ${\cal X}$ defined on an open neighbourhood of $x$ in ${\cal X}$. We'll say that that $u$ is locally of class $\Lambda^r$ at $x$ if there exists a compact neighbourhood $K$ of $x$ in $X$ which is contained in $V$ such that the composition $u\circ h^{-1}|_{h(K)}$ belongs to $\Lambda^r(h(K),\R^s)$ in the sense defined above.

Note that this condition is independent of $h$. This follows from the following composition lemma:
\begin{lm}\label{composition-with-smooth-from-right}
Let $\theta:U\to V$ be a  map of class ${\cal C}^\infty$ between  open sets $U\subset\R^m$, $V\subset\R^n$. Let $K\subset U$, $L\subset V$ be compact subsets  such that   $\theta(K)\subset L$. Let $u\in\Lambda^r(L,\R^s)$. Then $u\circ\theta\in\Lambda^r(K,\R^s)$.  \end{lm}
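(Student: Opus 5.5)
Since $u\in\Lambda^r(L,\R^s)$, there is by definition (\ref{DefLambda(F)}) some $\tilde u\in\Lambda^r(\R^n,\R^s)$ with $\tilde u|_L=u$. We show that $u\circ\theta$ is the restriction to $K$ of a globally defined $\Lambda^r$ function on $\R^m$. The natural candidate is $\chi\cdot(\tilde u\circ\theta)$, with a cut-off $\chi$ chosen so that this product is defined on all of $\R^m$.

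The cut-off is built as follows. Choose a compact neighbourhood $K'$ of $K$ with $K\subset\inte(K')\subset K'\subset U$, and a function $\chi\in{\cal C}^\infty_c(\R^m)$ with $\chi\equiv1$ on a neighbourhood of $K$ and $\operatorname{supp}\chi\subset\inte(K')$. Next choose $\tilde\theta\in{\cal C}^\infty(\R^m,\R^n)$ that agrees with $\theta$ on a neighbourhood of $K'$; for instance take $\tilde\theta=\psi\theta$ with $\psi$ a cut-off equal to $1$ near $K'$ and supported in $U$. Define $w\edf\chi\cdot(\tilde u\circ\tilde\theta)$ on $\R^m$. Then $w|_K=\tilde u\circ\theta|_K=u\circ\theta|_K$, because $\theta(K)\subset L$ and $\tilde u=u$ on $L$. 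Note that the values of $\tilde u$ outside $L$ do not matter for this identity. It therefore suffices to prove $w\in\Lambda^r(\R^m,\R^s)$.

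The proof of $w\in\Lambda^r$ splits into two standard facts about Zygmund spaces, both found in \cite[Section V.4]{St} and \cite[Section 3.2]{Gon}.

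\emph{Fact (a): multiplication by ${\cal C}^\infty_c$ functions preserves $\Lambda^r$.}

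\emph{Fact (b): composition on the right.} If $v\in\Lambda^r(\R^n)$ and $\Theta:\R^m\to\R^n$ is ${\cal C}^\infty$, then $\chi\,(v\circ\Theta)\in\Lambda^r(\R^m)$ for every $\chi\in{\cal C}^\infty_c$.

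Fact (b) is the main point. I would prove it by induction on $r$.

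For $0<r<1$, $\Lambda^r$ coincides with the Hölder space ${\cal C}^{0,r}$. Since $\Theta$ is Lipschitz on $\operatorname{supp}\chi$, we get
$$|v(\Theta(x))-v(\Theta(y))|\le C|x-y|^r,$$
and multiplying by $\chi$ keeps this bound.

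For $r=1$, I use the second-difference characterisation, and here is the real obstacle. One writes
$$v(\Theta(x+h))+v(\Theta(x-h))-2v(\Theta(x)),$$
sets $a=\Theta(x+h)-\Theta(x)$ and $b=\Theta(x)-\Theta(x-h)$, and notes that $a-b=O(|h|^2)$ by Taylor's formula. This gives
$$\big|v(\Theta(x)+a)+v(\Theta(x)-b)-2v(\Theta(x))\big|\le\big|\Delta^2_{a}v(\Theta(x))\big|+\big|v(\Theta(x)-b)-v(\Theta(x)-a)\big|.$$
The second term is bounded using the log-Lipschitz modulus of Zygmund functions, $|v(p)-v(q)|\le C|p-q|\,(1+|\log|p-q||)$, which yields $O(|h|^2\log(1/|h|))=O(|h|)$.

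For $r>1$, I use the fact that $v\in\Lambda^r$ if and only if $v\in\Lambda^{r-1}$ and $\partial_jv\in\Lambda^{r-1}$ for all $j$. The chain rule gives
$$\partial_i\big(\chi\,(v\circ\Theta)\big)=(\partial_i\chi)(v\circ\Theta)+\chi\sum_j(\partial_jv\circ\Theta)\,\partial_i\Theta_j,$$
and the induction hypothesis combined with Fact (a) closes the induction.

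Alternatively, the whole of Fact (b) can be quoted from the literature, for instance the invariance of Zygmund spaces under diffeomorphisms and the composition results in Gonzalez \cite{Gon}. This would avoid the case $r=1$ entirely. Either way, once $w\in\Lambda^r(\R^m,\R^s)$ is established, $u\circ\theta=w|_K\in\Lambda^r(K,\R^s)$ follows by the definition (\ref{DefLambda(F)}).
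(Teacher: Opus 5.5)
Your proof is correct and follows essentially the same route as the paper. Both extend $u$ to $\tilde u\in\Lambda^r(\R^n,\R^s)$, replace $\theta$ by a compactly supported smooth map $\tilde\theta:\R^m\to\R^n$ agreeing with $\theta$ near $K$, and use that right composition with such a map preserves $\Lambda^r$; the paper simply quotes this last fact from the literature. Your cut-off $\chi$ is harmless but redundant, since $\tilde u\circ\tilde\theta$ is already constant off a compact set and lies in $\Lambda^r(\R^m,\R^s)$. Your inductive sketch of the composition fact (the Hölder case, the log-Lipschitz estimate at $r=1$, and the derivative characterisation for $r>1$) is a valid self-contained substitute for that citation.
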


\begin{proof} Let $\iota_V:V\hookrightarrow\R^n$ the inclusion map and let $\theta':U\to \R^n$ be a differentiable map which coincides with $\iota_V\circ \theta$ on a neighbourhood of $K$ and has compact support contained in $U$. Let $\tilde\theta:\R^m\to\R^n$ be the obvious extension of $\theta'$ (which is obviously differentiable), and let $\tilde u\in \Lambda^r(\R^n,\R^s)$ be an extension of $u$.  Such an extension exists by the definition formula (\ref{DefLambda(F)}). Then $\tilde u\circ \tilde \theta:\R^m\to\R^s$ is an extension of $u\circ\theta$.  It is enough to note that composition from the right with a compactly supported differentiable map $\R^m\to\R^n$ defines a bounded operator 
$$\Lambda^r(\R^n,\R^s)\to \Lambda^r(\R^m,\R^s).$$
A more general  composition result for the Hölder-Zygmund spaces is \cite[Lemma 3.1]{GaGo}.
\end{proof}

We will say that $f$ is locally of class $\Lambda^r$ if it is locally of  this class at any point $x\in M$. We denote by $\Lambda^r_{\rm loc}(X,\R^s)$ the space of maps $u:X\to\R^s$ which are locally of  class $\Lambda^r$.

\begin{pr}\label{map-X-Y}
Let $X$, $Y$ be manifolds with boundary in ${\cal X}$, respectively ${\cal Y}$ and $f:X\to Y$ a  map of class ${\cal C}^\infty$.  For any $u\in \Lambda^r_{\rm loc}(Y,\R^s)$ the composition $u\circ f: X\to \R^s$ belongs to $\Lambda^r_{\rm loc}(X,\R^s)$.
\end{pr}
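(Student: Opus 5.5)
The claim is local, so I will reduce it to Lemma \ref{composition-with-smooth-from-right}. Fix $x\in X$ and put $y\edf f(x)\in Y$. Since $u\in\Lambda^r_{\rm loc}(Y,\R^s)$, there are a chart $k:W\to W'\subset\R^n$ of ${\cal Y}$ around $y$ and a compact neighbourhood $L$ of $y$ in $Y$ with $L\subset W$ and $u\circ k^{-1}|_{k(L)}\in\Lambda^r(k(L),\R^s)$. By the definition of ${\cal C}^\infty$ maps between manifolds with boundary, $f$ is the restriction of a ${\cal C}^\infty$ map $\tilde f:{\cal V}\to{\cal Y}$, where ${\cal V}$ is an open neighbourhood of $X$ in ${\cal X}$.

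Next choose a chart $h:V\to U\subset\R^m$ of ${\cal X}$ around $x$. Because $L$ is a neighbourhood of $y$ in $Y$, there is an open set $W_1\subset W$ with $y\in W_1$ and $W_1\cap Y\subset L$. Shrinking $V$, I may assume $V\subset{\cal V}\cap\tilde f^{-1}(W_1)$. Now take a compact neighbourhood $K$ of $x$ in $X$ with $K\subset V$; this exists because $X$ is locally compact, being a manifold with boundary. Then $f(K)\subset Y\cap W_1\subset L$.

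Set $\theta\edf k\circ\tilde f\circ h^{-1}:U\to W'$. This map is ${\cal C}^\infty$ between open subsets of $\R^m$ and $\R^n$, and $\theta(h(K))=k(f(K))\subset k(L)$. Both $h(K)$ and $k(L)$ are compact. Lemma \ref{composition-with-smooth-from-right}, applied to $u\circ k^{-1}|_{k(L)}$, gives
\[
(u\circ k^{-1})\circ\theta|_{h(K)}=(u\circ f)\circ h^{-1}|_{h(K)}\in\Lambda^r(h(K),\R^s).
\]
The equality holds because $\tilde f=f$ on $K\subset X$. Hence $u\circ f$ is locally of class $\Lambda^r$ at $x$. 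Since $x$ was arbitrary, $u\circ f\in\Lambda^r_{\rm loc}(X,\R^s)$.

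The only point needing care is the choice of neighbourhoods. Local $\Lambda^r$-regularity of $u$ is known only on the specific compact set $L\subset Y$, so $K$ must be chosen inside $X$ with $f(K)\subset L$. This is why I use the relative-openness argument with $W_1$, instead of simply taking preimages in ${\cal X}$, which may leave $Y$. The independence of the chart, noted before the lemma, is not needed here, since I exhibit one chart $h$ directly.
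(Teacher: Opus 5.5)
Your proof is correct and follows essentially the same route as the paper: extend $f$ to $\tilde f$, choose a chart $h$ around $x$ whose domain $\tilde f$ maps into the chart domain around $f(x)$, take a compact neighbourhood $K$ of $x$ in $X$ with $f(K)\subset L$, and apply Lemma \ref{composition-with-smooth-from-right} to $k\circ\tilde f\circ h^{-1}$. The only cosmetic difference is that you secure $f(K)\subset L$ via an open set $W_1$ with $W_1\cap Y\subset L$, whereas the paper uses directly that $f^{-1}(L)$ is a neighbourhood of $x$ in $X$.
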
 
\begin{proof}

We know that $f$  is the restriction to $X$ of a differentiable map $\tilde f:{\cal V}\to {\cal Y}$, where ${\cal V}$ is an open neighbourhood of $X$ in ${\cal X}$. 
Let $x\in X$ and let
$${\cal Y}\stackrel{\scriptscriptstyle\rm open}\supset W\textmap{\chi}U\stackrel{\scriptscriptstyle\rm open}\subset\R^n$$
be a chart of ${\cal Y}$ defined on an open neighbourhood $W$ of $f(x)$ in ${\cal Y}$. Let $L$ be a compact neighbourhood of $f(x)$ in $Y$ such that $L\subset W$ and 
$$u\circ \chi^{-1}|_{\chi(L)}\in \Lambda^r(\chi(L),\R^s).$$
$f^{-1}(L)$ is a {\it closed} neighbourhood of $x$ in $X$, $f^{-1}(L)\subset \tilde f^{-1}(L)\subset \tilde f^{-1}(W)$, and $\tilde f^{-1}(W)$  is an open neighbourhood of $x$ in ${\cal V}$.

Let
$${\cal V}\stackrel{\scriptscriptstyle\rm open}\supset V\textmap{h}S\stackrel{\scriptscriptstyle\rm open}\subset\R^m$$
be a chart of ${\cal V}$ defined on an open neighbourhood $V$ of $x$ in ${\cal V}$ which is contained in $\tilde f^{-1}(W)$. Let $K$ be a compact neighbourhood of $x$ in $X$ which is contained in $f^{-1}(L)$ and also in $V\cap X$. 

We have 
$$
u\circ f\circ h^{-1}|_{h(K)}=u\circ \tilde f\circ h^{-1}|_{h(K)}= (u\circ \chi^{-1})\circ(\chi\circ  \tilde f\circ h^{-1})|_{h(K)}.
$$
The composition
$$
\chi\circ  \tilde f\circ h^{-1}:S\to U
$$
is a differentiable map between open subsets of $\R^m$, respectively $\R^n$ which maps the compact $h(K)$ into $\chi(L)$. By Lemma \ref{composition-with-smooth-from-right} it follows that $(u\circ f)\circ h^{-1}|_{h(K)}\in \Lambda^r(h(K),\R^s)$, which shows that $u\circ f$ is locally of class $\Lambda^r$ at $x$.
\end{proof}

\begin{re}\label{restr-Y-X-M}
Using the composition Lemma \cite[Lemma 2.3]{Fo}, one can coherently define the set $\Lambda^r_{\rm loc}(X,M)$ of maps $X\to M$ which are locally of class $\Lambda^r$ for any manifold with boundary in ${\cal X}$ and any differentiable target manifold $M$. Proposition \ref{map-X-Y} obviously extends to $M$-valued maps.
\end{re}

For a Lie group $G$, a manifold with boundary $X$ in ${\cal X}$ and $r>0$ let ${\cal L}^{r\,G}_X$ be the sheaf of groups defined by 
$$
 U \mapsto {\cal L}^{r\,G}_X(U)=\Lambda^r_{\rm loc}(U,G)
$$ 
for relatively open subsets $U\subset X$.

\begin{dt}\label{Lambda-r-Bundle}
Let $G$ be a  Lie group. A  principal $G$-bundle of  class  $\Lambda^r_{\rm loc}$ on $X$ is a principal 	${\cal L}^{r\,G}_X$-bundle on $X$ in the sense of Definition \ref{calG-bdl}.	
\end{dt}

\begin{pr}\label{pull-back-Lambda-r}
Let $X$, $Y$ be manifolds with boundary in ${\cal X}$, respectively ${\cal Y}$ and $f:X\to Y$ a  map of class ${\cal C}^\infty$. Then the compatibility condition $C({\cal L}^{r\,G}_X,{\cal L}^{r\,G}_Y,f)$ introduced in Definition \ref{pull-back-bundle} is satisfied. In particular, for any principal $G$-bundle ${\cal T}$ of  class  $\Lambda^r_{\rm loc}$ on $Y$, we have a well defined   pull-back principal $G$-bundle  $f^*({\cal T})$  of  class  $\Lambda^r_{\rm loc}$  on $X$.
\end{pr}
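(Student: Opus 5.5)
To establish Proposition \ref{pull-back-Lambda-r}, I will verify condition (C\ref{C1}) of Definition \ref{pull-back-bundle}. Take a relatively open set $U'\subset Y$ and $\alpha\in {\cal L}^{r\,G}_Y(U')=\Lambda^r_{\rm loc}(U',G)$. The goal is to show that $\alpha\circ f|_{f^{-1}(U')}\in\Lambda^r_{\rm loc}(f^{-1}(U'),G)$. Once this holds, the morphism $\eta_f:f^{-1}({\cal L}^{r\,G}_Y)\to{\cal L}^{r\,G}_X$ is defined. The pull-back $f^*({\cal T})=f^{-1}({\cal T})\times_{\eta_f}{\cal L}^{r\,G}_X$ is then a principal ${\cal L}^{r\,G}_X$-bundle, by the group sheaf change construction of section \ref{sheaf-change-section} and the fact that $f^{-1}({\cal T})$ is an $f^{-1}({\cal L}^{r\,G}_Y)$-torsor.

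For the main step I first reduce to the setting of Proposition \ref{map-X-Y}. By Remark \ref{subman-with-bd}(\ref{1st}), $U'=Y\cap{\cal U}'$ for some open ${\cal U}'\subset{\cal Y}$, and $U'$ is a manifold with boundary in ${\cal U}'$. Because $f$ is the restriction of a smooth $\tilde f:{\cal V}\to{\cal Y}$, the set $f^{-1}(U')=X\cap\tilde f^{-1}({\cal U}')$ is a manifold with boundary in the open set $\tilde f^{-1}({\cal U}')$. Moreover $f|_{f^{-1}(U')}:f^{-1}(U')\to U'$ is ${\cal C}^\infty$ in the sense used here, since it is induced by $\tilde f|_{\tilde f^{-1}({\cal U}')}$. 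The problem is therefore reduced to the global statement with $X,Y$ replaced by $f^{-1}(U')$ and $U'$.

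Next comes the $G$-valued version. Being locally of class $\Lambda^r$ is a local property, so I fix $x\in f^{-1}(U')$ and a chart $\psi:W\to\R^s$ of $G$ around $\alpha(f(x))$. Since $\alpha$ is continuous, there is a relatively open neighbourhood $Y_1\subset U'$ of $f(x)$ with $\alpha(Y_1)\subset W$. I take it to be the coherent definition of Remark \ref{restr-Y-X-M} that a $G$-valued map is locally $\Lambda^r$ near a point exactly when its composition with a chart of $G$ is locally $\Lambda^r$ near that point; the chart-independence comes from the composition lemma \cite[Lemma 2.3]{Fo}. Hence $\psi\circ\alpha|_{Y_1}\in\Lambda^r_{\rm loc}(Y_1,\R^s)$. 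Applying Proposition \ref{map-X-Y} to $f|_{f^{-1}(Y_1)}:f^{-1}(Y_1)\to Y_1$ gives $\psi\circ\alpha\circ f\in\Lambda^r_{\rm loc}(f^{-1}(Y_1),\R^s)$, that is, $\alpha\circ f$ is locally $\Lambda^r$ at $x$. Remark \ref{restr-Y-X-M} already states that Proposition \ref{map-X-Y} extends to $M$-valued maps, so this step could also be quoted directly.

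The only delicate point is bookkeeping, not analysis. I must make sure that restricting to relatively open subsets keeps everything inside the framework of Definition \ref{XincalX}: the ambient manifolds change to open subsets, and smoothness of $f$ must be witnessed by an extension defined on an open neighbourhood. Remark \ref{subman-with-bd}(\ref{1st}) together with the extension $\tilde f$ settles this. The analytic content, namely boundedness of composition on $\Lambda^r$, is entirely contained in Lemma \ref{composition-with-smooth-from-right} and Proposition \ref{map-X-Y}.
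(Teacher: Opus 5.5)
Your proposal is correct and follows the paper's route. The paper's proof likewise notes that a relatively open $V\subset Y$ and its preimage $f^{-1}(V)\subset X$ are again manifolds with boundary, and that $f$ induces a smooth map $f^{-1}(V)\to V$; it then concludes by Proposition~\ref{map-X-Y} and Remark~\ref{restr-Y-X-M}. Your chart-of-$G$ argument and the explicit bookkeeping with $\tilde f^{-1}({\cal U}')$ just spell out details the paper leaves implicit.
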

\begin{proof}
It suffices to note that, for any relatively open set $V\subset Y$, $V$ is also a manifold with boundary in ${\cal Y}$,   $f^{-1}(V)$ is also a manifold with boundary in ${\cal X}$, and $f$ induces a smooth map $f^{-1}(V)\to V$. The claim follows by Proposition \ref{map-X-Y} and Remark \ref{restr-Y-X-M}.
\end{proof}

In the special case when $X$, $Y$ are manifolds with boundary in the same ${\cal X}$ and $X\subset Y$, the pull-back bundle  $\iota^*({\cal T})$ via the  inclusion map $\iota:X\hookrightarrow Y$ will be called the restriction of ${\cal T}$ to $X$.
\vspace{2mm}

Similarly, in the complex geometric framework, in order to avoid considering the delicate collar problem for abstract complex manifolds with boundary (see \cite{Ca}, \cite{Hi}, \cite{HiNa}-\cite{HiNafix}), we will only consider complex manifolds with boundary which are already embedded in a complex manifold (without boundary) of the same dimension. More precisely 
\begin{dt}\label{XcomplexincalX}
Let ${\cal X}$ be a complex manifold. A complex manifold with boundary in ${\cal X}$ is a manifold with boundary in $X$ in the sense of Definition \ref{XincalX}.
\end{dt}

Therefore, if $X$  is a complex manifold with boundary in ${\cal X}$, then its interior $\inte(X)$ is an open complex submanifold of ${\cal X}$, and its boundary $\partial X$ is a real hypersurface in  ${\cal X}$, so it comes with a canonical CR-structure.

Let ${\cal X}$ be a complex manifold and $X$  a complex manifold with boundary in ${\cal X}$ in the sense of Definition \ref{XcomplexincalX}. Let 
$$\rg\in \big\{{\cal C}^k|\ k\in\N\cup\{\infty\}\big\}\cup\big\{\Lambda^r_{\rm loc}|\ r\in (0,\infty)\big\} $$
be one of the regularity classes introduced above  for maps defined on open sets in manifolds with boundary.

We define the sheaf of $\C$-algebras ${\cal O}^{\rg}_X$ and the sheaf of groups   ${\cal O}^{\rg\,G}_X$ on $X$ as follows:
\begin{itemize}
\item For $\rg={\cal C}^k$, we   put
\begin{equation}
\begin{split}
{\cal O}^{\rg}_X(V)\edf & \big\{f\in {\cal C}^k(V,\C)|\ f|_{V\cap\inte(X)} \hbox{ is holomorphic}\big\},\\
{\cal O}^{\rg\,G}_X(V)\edf &\big\{f\in {\cal C}^k(V,G)|\ f|_{V\cap\inte(X)} \hbox{ is holomorphic}\big\},
\end{split}
\end{equation}
\item for $\rg =\Lambda^r_{\rm loc}$, we put   %
\begin{equation}
\begin{split}
{\cal O}^{\rg}_X(V)\edf & \big\{f\in \Lambda^r_{\rm loc}(V,\C)|\ f|_{V\cap\inte(X)} \hbox{ is holomorphic}\big\}\\
{\cal O}^{\rg\,G}_X(V)\edf & \big\{f\in \Lambda^r_{\rm loc}(V,G)|\ f|_{V\cap\inte(X)} \hbox{ is holomorphic}\big\}
\end{split}
\end{equation}	
\end{itemize}
for relatively open sets $V\subset X$. Note that for $\rg={\cal C}^k$ the notation ${\cal A}^k_X$ is frequently used in the literature (\cite{Se}, \cite{DrFo}, \cite{Lei}, \cite{Heu}).
 
 \begin{dt}\label{OrG-bundle}
 Let $G$ be a complex Lie group and 
 $$\rg\in\big\{{\cal C}^k|\ k\in\N\cup\{\infty\}\big\}\cup\big\{\Lambda^r_{\rm loc}|\ r\in (0,\infty)\big\}$$
  a regularity class. A holomorphic principal $G$-bundle of class $\rg$ on $X$ is a principal 	${\cal O}^{\rg\,G}_X$-bundle on $X$ in the sense of Definition \ref{calG-bdl}.
 \end{dt}

 Now let $X$, $Y$ be complex manifolds with boundary in ${\cal X}$, respectively $Y$ and $f:X\to Y$ a differentiable map which restricts to a holomorphic map 
 $$f_0:\inte(X)\to\inte(Y).$$
  For any relatively open set $V\subset Y$ and map $u\in {\cal O}^{\rg\,G}_Y(V)$, the composition $u\circ f$ belongs to ${\cal O}^{\rg\,G}_X(f^{-1}(V))$. 

 Indeed, taking into account Proposition \ref{map-X-Y} and Remark \ref{restr-Y-X-M} we only have to show that the restriction of $u\circ f:f^{-1}(V)\to G$ to $f^{-1}(V)\cap\inte(X)$  is holomorphic. But  
 $$u\circ f|_{f^{-1}(V)\cap\inte(X)}=u|_{V\cap\inte(Y)}\circ f_0|_{f^{-1}(V)\cap\inte(X)},
 $$
 which is obviously holomorphic.
 
  This shows that
  \begin{pr}\label{pull-back-Lambda-hol}
Let $X$, $Y$ be complex manifolds with boundary in ${\cal X}$, respectively $Y$ and let $f:X\to Y$ a differentiable map which restricts to a holomorphic map $f_0:\inte(X)\to\inte(Y)$. 

Let $G$ be a complex Lie group and 
 $$\rg\in\big\{{\cal C}^k|\ k\in\N\cup\{\infty\}\big\}\cup\big\{\Lambda^r_{\rm loc}|\ r\in (0,\infty)\big\}$$
one of the regularity classes considered above.
  
The compatibility condition $C({\cal O}^{\rg\,G}_X,{\cal O}^{\rg\,G}_Y,f)$ introduced in Definition \ref{pull-back-bundle} is satisfied. In particular, for any holomorphic principal $G$-bundle ${\cal T}$ of  class  $\rg$ on $Y$, we have a well defined  pull-back holomorphic principal $G$-bundle  $f^*({\cal T})$  of  class  $\rg$  on $X$.
\end{pr}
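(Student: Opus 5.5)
The plan is to verify condition (C\ref{C1}) of Definition \ref{pull-back-bundle} directly and then read off the pull-back statement from that definition. So fix a relatively open set $V\subset Y$ and $u\in{\cal O}^{\rg\,G}_Y(V)$. We must show $u\circ f\in{\cal O}^{\rg\,G}_X(f^{-1}(V))$. By Remark \ref{subman-with-bd}(\ref{1st}), $V$ is a complex manifold with boundary in an open subset of ${\cal Y}$, and $f^{-1}(V)$ is a complex manifold with boundary in an open subset of ${\cal X}$. Moreover, $f$ restricts to a differentiable map $f^{-1}(V)\to V$; here "differentiable" is taken in the sense of class ${\cal C}^\infty$, or at least ${\cal C}^k$ when $\rg={\cal C}^k$.

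The argument splits into two parts: regularity and holomorphy.

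\emph{Regularity.} If $\rg={\cal C}^k$, the composition $u\circ f$ is of class ${\cal C}^k$, because we can compose ${\cal C}^k$ extensions of $u$ and of $f$ defined on open neighbourhoods in ${\cal Y}$ and ${\cal X}$. If $\rg=\Lambda^r_{\rm loc}$, we apply Proposition \ref{map-X-Y} to the smooth map $f^{-1}(V)\to V$, extended to $G$-valued maps by Remark \ref{restr-Y-X-M}; this gives $u\circ f\in\Lambda^r_{\rm loc}(f^{-1}(V),G)$. This is the same argument already used for Proposition \ref{pull-back-Lambda-r}.

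\emph{Holomorphy.} Since $f(\inte(X))\subset\inte(Y)$, on $f^{-1}(V)\cap\inte(X)$ we have
$$u\circ f=u|_{V\cap\inte(Y)}\circ f_0|_{f^{-1}(V)\cap\inte(X)}.$$
This is a composition of holomorphic maps between open subsets of complex manifolds, so it is holomorphic.

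Once (C\ref{C1}) holds, the equivalent conditions give a group sheaf morphism $\eta_f:f^{-1}({\cal O}^{\rg\,G}_Y)\to{\cal O}^{\rg\,G}_X$. Definition \ref{pull-back-bundle} then yields $f^*({\cal T})=f^{-1}({\cal T})\times_{\eta_f}{\cal O}^{\rg\,G}_X$. This is an ${\cal O}^{\rg\,G}_X$-torsor by the group sheaf change construction of section \ref{sheaf-change-section}, applied to the $f^{-1}({\cal O}^{\rg\,G}_Y)$-torsor $f^{-1}({\cal T})$.

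The only point needing care is the $\Lambda^r_{\rm loc}$ case with target the Lie group $G$ rather than $\R^s$. This needs the composition lemma of \cite{Fo} to make the definition chart-independent, and that is exactly what Remark \ref{restr-Y-X-M} supplies. The rest is formal.
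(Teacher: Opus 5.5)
Your proposal is correct and is essentially the paper's own argument. The regularity of $u\circ f$ on $f^{-1}(V)$ comes from Proposition \ref{map-X-Y} together with Remark \ref{restr-Y-X-M}; you handle the $\mathcal{C}^k$ case by composing extensions, which the paper leaves implicit. Holomorphy on $f^{-1}(V)\cap\inte(X)$ comes from writing $u\circ f=u|_{V\cap\inte(Y)}\circ f_0$ there, and the pull-back statement then follows directly from Definition \ref{pull-back-bundle}, just as in the paper.
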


Consider the special case when $X$, $Y$ are both complex manifolds with boundary in the same complex manifold ${\cal X}$ such that $X\subset Y$. 
\begin{re}\label{restrict-r-bdl-rem}
The condition $X\subset Y$ implies $\inte(X)\subset\inte(Y)$, so the inclusion map $\iota:X\hookrightarrow Y$ induces the inclusion $\iota_0:\inte(X)\hookrightarrow \inte(Y)$, which is obviously holomorphic. Therefore the hypothesis of Proposition \ref{pull-back-Lambda-hol} is fulfilled. 
\end{re}
This allows us to define:
\begin{dt}\label{restrict-r-bdl-def}
Let $X$, $Y$ be complex manifolds with boundary in the same complex manifold ${\cal X}$ such that $X\subset Y$, and let $\iota:X\hookrightarrow$ be the inclusion map. 
The pull-back bundle  $\iota^*({\cal T})$ of a holomorphic principal $G$-bundle ${\cal T}$ of  class  $\rg$ on $Y$   will be called the restriction of ${\cal T}$ to $X$.

\end{dt}

\subsection{Gluing torsors}

Let $X$ be a topological space,  $(U_i)_{i\in I}$  an open cover of $X$, and ${\cal G}$ be a  sheaf of  groups on $X$.  Denote by ${\cal G}_i$, ${\cal G}_{ij}$ the restriction of ${\cal G}$ to $U_i$, respectively $U_i\cap U_j$.

\begin{dt}
 A ${\cal G}$-torsor gluing data subordinate to 	$(U_i)_{i\in I}$ is a pair 
 $$\big(({\cal S}_i)_{i\in I},(\sigma_i^j)_{(i,j)\in I^2}\big)$$
  consisting of 
\begin{enumerate}
\item a family $({\cal S}_i)_{i\in I}$ of torsors, ${\cal S}_i$ being a  ${\cal G}_i$-torsor on $U_i$.
\item a family $(\sigma_i^j)_{(i,j)\in I^2}$ of torsor isomorphisms, $\sigma_i^j:{\cal S}_i|_{U_i\cap U_j}\textmap{\simeq}{\cal S}_j|_{U_i\cap U_j}$ being an isomorphism of ${\cal G}_{ij}$-torsors, satisfying the usual cocycle conditions:  
 \begin{enumerate}
	\item $\sigma_i^i=\id_{{\cal S}_i}$  for any $i\in I$.
	\item For each triple $(i,j,k)\in I^3$  we have $\sigma_i^k=\sigma_j^k \circ \sigma_i^j$ on $U_i\cap U_j \cap U_k$.
\end{enumerate}
	\end{enumerate}

\end{dt}

The following gluing principle  for torsors is an easy consequence of the well known gluing principle for sheaves of sets, see for instance  \cite[Lemma 6.33.2]{Stacks}:
\begin{pr}\label{GluingProp}
To any ${\cal G}$-torsor gluing data $\big(({\cal S}_i)_{i\in I},(\sigma_i^j)_{(i,j)\in I^2}\big)$ subordinate to $(U_i)_{i\in I}$ there is a canonically associated    pair $\big({\cal S},(\sigma_i: {\cal S}_i \textmap{\simeq} {\cal S}|_{U_i} )_{i\in I}\big)$, where ${\cal S}$ is a  ${\cal G}$-torsor on $X$ and for any $i\in I$, $\sigma_i: {\cal S}_i \textmap{\simeq} {\cal S}|_{U_i} $ is an isomorphism of  ${\cal G}_i$-torsors, which satisfies: 
\begin{enumerate}
\item The following compatibility conditions: 
\begin{equation}\label{CompCond}
\sigma_i=\sigma_j \circ \sigma_i^j \hbox{ on }U_i\cap U_j \hbox{ for any }i,\ j\in I.  	
\end{equation}

\item The following universal property: 

For any pair $({\cal T},(\theta_i: {\cal S}_i \to  {\cal T}|_{U_i} )_{i\in I})$ where ${\cal T}$  is a  ${\cal G}$-torsor  on $X$, and, for any $i\in I$, $\theta_i$ is an isomorphism of ${\cal G}_i$-torsors,    satisfying the compatibility conditions
\begin{equation}\label{CompCond-new}
\theta_i=\theta_j \circ \sigma_i^j \hbox{ on }U_i\cap U_j  \hbox{ for any }i,\ j\in I,
\end{equation}
there exists a  unique  ${\cal G}$-torsor isomorphism $\theta:{\cal S}\to  {\cal T}$ such that 
\begin{equation}\label{g_i-psi_i}
\theta|_{U_i}\circ \sigma_i=\theta_i \hbox{ for any }i\in I. 
\end{equation}
\end{enumerate}
\end{pr}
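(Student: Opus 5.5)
The plan is to reduce the statement to the classical gluing principle for sheaves of sets (\cite[Lemma 6.33.2]{Stacks}) and then check that the torsor structure glues as well. First I forget the actions. The sheaves ${\cal S}_i$ on $U_i$, with the isomorphisms $\sigma_i^j$ satisfying the cocycle conditions, form gluing data for sheaves of sets. The cited lemma then gives a sheaf of sets ${\cal S}$ on $X$ with isomorphisms $\sigma_i:{\cal S}_i\to{\cal S}|_{U_i}$ satisfying (\ref{CompCond}). Concretely, I will take
$${\cal S}(V)=\Big\{(s_i)_{i\in I}\ \Big|\ s_i\in{\cal S}_i(V\cap U_i),\ \sigma_i^j(s_i|_{V\cap U_i\cap U_j})=s_j|_{V\cap U_i\cap U_j}\Big\},$$
with $\sigma_i$ inverse to the projection ${\cal S}|_{U_i}\to{\cal S}_i$. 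The fact that this projection is an isomorphism is where $\sigma_i^i=\id$ and the triple cocycle condition are used.

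Next I define the right ${\cal G}$-action componentwise: $(s_i)_i\cdot g\edf (s_i\cdot g|_{V\cap U_i})_i$. This is well defined because each $\sigma_i^j$ is ${\cal G}_{ij}$-equivariant, and it is compatible with restrictions. For (To\ref{To2}), any $x\in U_i$ has a neighbourhood $W\subset U_i$ with ${\cal S}_i(W)\neq\emptyset$, hence ${\cal S}(W)\cong{\cal S}_i(W)\neq\emptyset$. For (To\ref{To1}) on an open set $V$:
\begin{itemize}
\item \emph{freeness} holds because $s\cdot g=s$ forces $g|_{V\cap U_i}=e$ for all $i$ (by freeness of each ${\cal S}_i$), so $g=e$ by the sheaf axiom for ${\cal G}$;
\item \emph{transitivity}: given $s,s'\in{\cal S}(V)$, there are unique $g_i\in{\cal G}(V\cap U_i)$ with $s_i g_i=s'_i$. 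Equivariance of $\sigma_i^j$ together with freeness shows that the $g_i$ agree on overlaps, so they glue to $g\in {\cal G}(V)$ with $sg=s'$.
\end{itemize}
By construction each $\sigma_i$ is equivariant, so it is an isomorphism of ${\cal G}_i$-torsors.

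For the universal property, given $({\cal T},(\theta_i))$ satisfying (\ref{CompCond-new}), I define $\theta$ locally by $\theta|_{U_i}\edf\theta_i\circ\sigma_i^{-1}$. On $U_i\cap U_j$ these maps agree, since
$$\theta_j\sigma_j^{-1}=\theta_i(\sigma_i^j)^{-1}\sigma_j^{-1}=\theta_i\sigma_i^{-1}$$
by (\ref{CompCond}) and (\ref{CompCond-new}). Morphisms of sheaves glue, so $\theta$ exists; it is equivariant and an isomorphism because it is so locally. Uniqueness is immediate, since (\ref{g_i-psi_i}) determines $\theta$ on every $U_i$. I do not expect a real obstacle here. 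The only point needing care is transitivity, where the locally obtained group elements must be glued using freeness and the sheaf property of ${\cal G}$.
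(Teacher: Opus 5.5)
Your proposal is correct and follows the same route as the paper. The paper simply deduces the statement from the gluing lemma for sheaves of sets (Stacks Project, Lemma 6.33.2). You supply the routine verifications it leaves implicit: the componentwise ${\cal G}$-action, freeness and transitivity via the sheaf axioms for ${\cal G}$, and the construction of $\theta$ by gluing the local maps $\theta_i\circ\sigma_i^{-1}$.
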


More precisely: for a fixed open cover 	$(U_i)_{i\in I}$ of $X$, we have a groupoid whose whose objects are the  ${\cal G}$-torsor gluing data subordinate to $(U_i)_{i\in I}$, and which is equivalent to the groupoid of  ${\cal G}$-torsors on $X$.  

A very simple special case of Proposition \ref{GluingProp} is the following 
\begin{co}\label{general-extension}
Let $(V_0,V_1)$  be an open cover of $X$,  ${\cal G}$  a  sheaf of  groups on $X$ and let ${\cal G}_0$, ${\cal G}_1$, ${\cal G}_{01}$  the restrictions of ${\cal G}$ to $V_0$, $V_1$ and $V_{01}\edf V_0\cap V_1$ respectively. Let ${\cal S}_i$ be  ${\cal G}_i$-torsors on $V_i$ for $0\leq i\leq 1$,  and let 
$\sigma: {\cal S}_0|_{V_{01}}\to {\cal S}_1|_{V_{01}}$ be an isomorphism of ${\cal G}_{01}$-torsors.  	
There exists a canonically associated triple $({\cal S}_0\coprod_\sigma{\cal S}_1,\sigma_0,\sigma_1)$ consisting of a ${\cal G}$-torsor ${\cal S}_0\coprod_\sigma{\cal S}_1$ on $X$ and isomorphisms 
$\sigma_i:{\cal S}_i\textmap{\simeq}({\cal S}_0\coprod_\sigma{\cal S}_1)|_{V_i}$ 
 satisfying $\sigma_0|_{V_{01}}=\sigma_1|_{V_{01}}\circ\sigma $  and  the following universal property:

For any ${\cal G}$-torsor ${\cal T}$ on $X$ and any  pair $(\theta_0,\theta_1)$ of torsor isomorphisms 
$$\theta_i:{\cal S}_i\textmap{\simeq} {\cal T}|_{V_i}$$
satisfying the compatibility condition $\theta_0|_{V_{01}}=\theta_1|_{V_{01}}\circ \sigma$, there is a unique ${\cal G}$-torsor isomorphism $\theta:{\cal S}_0\coprod_\sigma{\cal S}_1\textmap{\simeq} {\cal T}$ such that $\theta|_{V_i}\circ \sigma_i=\theta_i$ for $0\leq i\leq 1$.
\end{co}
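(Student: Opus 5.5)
The plan is to derive Corollary \ref{general-extension} directly from Proposition \ref{GluingProp}. I take the index set $I=\{0,1\}$ with the open cover $(U_0,U_1)=(V_0,V_1)$. The gluing data is $({\cal S}_0,{\cal S}_1)$ together with the isomorphisms
$$\sigma_0^0=\id_{{\cal S}_0},\quad \sigma_1^1=\id_{{\cal S}_1},\quad \sigma_0^1=\sigma,\quad \sigma_1^0=\sigma^{-1}.$$
Here $\sigma^{-1}$ is again an isomorphism of ${\cal G}_{01}$-torsors, since the inverse of a bijective equivariant sheaf morphism is equivariant.

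Next I check the cocycle conditions. Condition (a) holds by construction. For (b), I run through the triples $(i,j,k)\in\{0,1\}^3$ on $V_0\cap V_1$. Whenever two indices coincide, the identity follows either from $\sigma_i^i=\id$ or from $\sigma\circ\sigma^{-1}=\id$ and $\sigma^{-1}\circ\sigma=\id$. For example, $(0,1,0)$ gives $\sigma_0^0=\sigma_1^0\circ\sigma_0^1=\sigma^{-1}\circ\sigma=\id$. Since only two indices are available, every triple has a repetition, so this finite verification covers all cases.

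Proposition \ref{GluingProp} then provides a ${\cal G}$-torsor ${\cal S}$, which I denote ${\cal S}_0\coprod_\sigma{\cal S}_1$, together with isomorphisms $\sigma_i$. The compatibility condition (\ref{CompCond}) with $(i,j)=(0,1)$ gives exactly $\sigma_0=\sigma_1\circ\sigma$ on $V_{01}$.

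For the universal property, let $({\cal T},\theta_0,\theta_1)$ be given with $\theta_0=\theta_1\circ\sigma$ on $V_{01}$. I check (\ref{CompCond-new}) for all pairs $(i,j)$:
\begin{itemize}
\item $(0,1)$ is the hypothesis;
\item $(1,0)$ reads $\theta_1=\theta_0\circ\sigma^{-1}$, which is equivalent to the hypothesis;
\item the diagonal pairs are trivial.
\end{itemize}
Proposition \ref{GluingProp} then yields a unique $\theta$ with $\theta|_{V_i}\circ\sigma_i=\theta_i$.

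There is no real obstacle here. The only point needing care is to include the reverse isomorphism $\sigma_1^0=\sigma^{-1}$, so that the data is a full gluing datum indexed by $I^2$, and to check that the corresponding compatibility condition for the pair $(1,0)$ is automatic.
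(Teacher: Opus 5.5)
Your proposal is correct and follows the paper's route: the paper states this corollary as a very simple special case of Proposition \ref{GluingProp} without further proof, and your specialization to $I=\{0,1\}$ with $\sigma_0^1=\sigma$ and $\sigma_1^0=\sigma^{-1}$ supplies the omitted details. The choice of $\sigma_1^0$ is forced by the cocycle condition for $(0,1,0)$, which is why the construction is canonical, and your checks of the cocycle conditions, of (\ref{CompCond}) for $(i,j)=(0,1)$, and of (\ref{CompCond-new}) for the pair $(1,0)$ are all correct.
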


In the next section we will apply this gluing principle in the special case when ${\cal S}_1$ is trivial. In this case the statement can be regarded as an extension principle (from $V_0$ to $X$) for a torsor ${\cal S}_0$ defined on $V_0$ which is trivial on $V_{01}$.
 
The pull back functor in the sense of Definition \ref{pull-back-bundle}, when  defined, commutes with gluing in the sense of Proposition \ref{GluingProp}. We state this property in the special case considered in Corollary \ref{general-extension}:

\begin{re}\label{restr-comm-with-gl} (pull-back commutes with gluing) Let $f:X\to X'$ be a continuous map between topological spaces, and let ${\cal G}\subset {\cal C}^G_X$, ${\cal G}'\subset{\cal C}^G_{X'}$ be subsheaves of subgroups on $X$, respectively $X'$ such that the compatibility condition $C({\cal G},{\cal G}',f)$ is satisfied. 

Let $(V'_0,V'_1)$  be an open cover of $X'$,  ${\cal S}'_i$ be  principal ${\cal G}_i$-bundles on $V'_i$,  and let
$$\sigma': {\cal S}'_0|_{V'_{01}}\to {\cal S}'_1|_{V'_{01}}$$
be an isomorphism.  Put $V_i\edf f^{-1}(V'_i)$ and let ${\cal S}_i\edf f^*({\cal S}'_i)$, $\sigma\edf f^*(\sigma')$ be the objects obtained by applying the pull back functor introduced in Definition \ref{pull-back-bundle} to ${\cal S}'_i$, $\sigma'$ respectively. Then we have a canonical isomorphism  
$$f^*\big({\cal S}'_0\coprod_{\sigma'}{\cal S}'_1\big)\simeq {\cal S}_0\coprod_{\sigma}{\cal S}_1.$$

\end{re}

\begin{proof}
It suffices to note that the inverse image functor $f^{-1}$ and 	the group sheaf change functor intervening in the definition of the pull-back commutes with gluing.  This follows easily using the universal property of the torsor associated with gluing data taking into account that both functors  commute with restriction to open sets.
\end{proof}

%

%
%
%
%
%
%

\section{The proof of the main theorem}

An important ingredient in the proof of Theorem \ref{mainTh} is the Oka principle for bundles on strictly pseudoconvex compact manifolds with boundary in Stein manifolds  (see \cite[Theorem 1.7]{DrFo}, \cite[Theorem 5.4.11]{Fo-book}, \cite[Theorem 7.1]{Fo}):

\begin{pr}\label{Oka} [Oka principle for compact manifolds with boundary]
Let ${\cal X}$ be a Stein manifold, $D\Subset{\cal X}$  a  smoothly bounded strictly pseudoconvex domain in ${\cal X}$, $G$ a complex Lie Group, and
 $$\rg\in\big\{{\cal C}^k|\ k\in\N\cup\{\infty\}\big\}\cup\big\{\Lambda^r_{\rm loc}|\ r\in (0,\infty)\big\}$$
 one of the regularity classes considered above.
 
 Let ${\cal P}$ be a holomorphic principal $G$-bundle  of class $\rg$ on $\bar D$. Then any continuous section of (the underlying topological bundle of) ${\cal P}$ is homotopic to a section of ${\cal P}$.
 
 In particular ${\cal P}$ is trivial if and only if its underlying topological bundle is trivial.  
\end{pr}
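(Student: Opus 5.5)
The statement is the Oka principle established in \cite{DrFo} for $\rg={\cal C}^k$ and in \cite{Fo} for $\rg=\Lambda^r_{\rm loc}$. I would not reprove it from scratch but follow the structure of those proofs, checking that each step works for our sheaves ${\cal O}^{\rg\,G}_{\bar D}$. The final sentence is immediate from the first. A trivial bundle has a global section, hence a continuous one. Conversely, if the underlying topological bundle is trivial, it has a continuous section; this is homotopic to a section of ${\cal P}$, and a global section of a torsor trivialises it.

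For the main assertion, fix a continuous section $s_0$ of the underlying topological bundle $P$. I would use the Henkin--Leiterer bumping scheme \cite{HL} inside the Stein manifold ${\cal X}$. Choose a strictly plurisubharmonic exhaustion $\rho$ of a neighbourhood of $\bar D$ with $D=\{\rho<0\}$. Then write $\bar D$ as $\bar D_0\cup B_1\cup\dots\cup B_N$, where $\bar D_0$ is a small strongly convex compact set and the sets $A_j\edf\bar D_0\cup B_1\cup\dots\cup B_j$ are strictly pseudoconvex. Each $B_j$ is a convex bump attached to $A_{j-1}$, and each $B_j$ and $\bar D_0$ is so small that ${\cal P}$ is trivial over it. Here one may need both noncritical bumps and the passage of critical points of $\rho$, handled as in \cite{HL}, \cite{DrFo}. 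On $\bar D_0$ the torsor is trivial, so a section exists there. Since $\bar D_0$ is contractible, it is homotopic to $s_0|_{\bar D_0}$.

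Inductive step: suppose $s_{j-1}$ is a section of ${\cal P}$ over $A_{j-1}$ homotopic to $s_0$. Pick a section $t_j$ of ${\cal P}$ over $B_j$ that is homotopic to $s_0|_{B_j}$; this is possible since $B_j$ is contractible and ${\cal P}|_{B_j}$ is trivial. On $C_j\edf A_{j-1}\cap B_j$ we get a transition map $\gamma=s_{j-1}^{-1}t_j\in{\cal O}^{\rg\,G}(C_j)$ that is null-homotopic. By the classical Oka--Weil/Grauert approximation on the Stein compact $C_j$, we may first modify $t_j$ so that $\gamma$ becomes uniformly close to $e_G$. This uses the Runge property of $C_j$ in $B_j$, and connects through the null-homotopy together with $\exp$ of small Lie-algebra-valued maps. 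We then apply the Cartan splitting lemma with regularity $\rg$: $\gamma=\alpha\beta^{-1}$ with $\alpha\in{\cal O}^{\rg\,G}(A_{j-1})$ and $\beta\in{\cal O}^{\rg\,G}(B_j)$, both close to $e_G$. Then $s_{j-1}\alpha$ and $t_j\beta$ agree on $C_j$ and glue, by Corollary \ref{general-extension}, to a section $s_j$ over $A_j$ homotopic to $s_0$. After $N$ steps we obtain the required section over $\bar D$.

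The main obstacle is the splitting lemma in class $\rg$ up to the boundary of $D$. After passing to the Lie algebra by $\log$ and an implicit-function argument, it reduces to solving the additive Cousin problem $c=a-b$ with bounded linear operators in ${\cal O}^{\rg}$. This in turn needs solution operators for $\bar\partial$ on the strictly pseudoconvex sets $A_j\cup B_j$ that are bounded in $\Lambda^r$, respectively ${\cal C}^k$, up to the boundary. For $\Lambda^r$ I would invoke the Henkin-type integral operators with the Zygmund estimates of \cite{Gon}, together with the composition lemma \cite[Lemma 2.3]{Fo}. The latter is needed for the nonlinear step to preserve the class $\Lambda^r_{\rm loc}$ under $\exp$, $\log$ and products in $G$; this is exactly the analysis carried out in \cite{Fo}.
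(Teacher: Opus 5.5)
For finite $k$ and for $\Lambda^r_{\rm loc}$ the paper does exactly what your first sentence says: it quotes \cite[Theorem 1.7]{DrFo} and \cite[Theorem 1.1]{Fo}. Your deduction of the final sentence is fine, and your bumping outline sketches those references rather than anything the paper itself does.

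\textbf{Gap for $\rg={\cal C}^\infty$.} The paper invokes \cite{DrFo} only for $k\in\N$ and handles ${\cal C}^\infty$ separately, and your outline does not reach $k=\infty$. Your splitting step is an implicit-function argument, which needs a Banach space, whereas ${\cal C}^\infty(C_j)$ is only Fr\'echet. Running it in ${\cal C}^k$ for each $k$ separately has two problems:
\begin{enumerate}
\item $\gamma$ would have to lie within the implicit-function threshold $\epsilon_k$ of $e_G$ in the ${\cal C}^k$-norm on $C_j$ for all $k$ at once. $C_j$ can meet $\partial D$, so interior Cauchy estimates do not help there. One approximation cannot arrange this when the $\epsilon_k$ shrink with $k$.
\item You would also have to show that the ${\cal C}^k$-splittings obtained for different $k$ agree.
\end{enumerate}
The paper sidesteps this entirely. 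It extends $P$ and $s$ to a neighbourhood $U$ retracting onto $\bar D$. By \cite[Theorem 1.1]{Te}, the formally integrable structure of ${\cal P}$ extends to an integrable one on $\Pi|_{U'}$, for a strictly pseudoconvex (hence Stein) neighbourhood $U'$ of $\bar D$. It then applies Grauert's classical Oka principle on $U'$ to $\sigma|_{U'}$ and restricts the resulting holomorphic section to $\bar D$.

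\textbf{Mismatch for $\Lambda^r$.} Your approximation step only makes $\gamma$ \emph{uniformly} close to $e_G$. An implicit-function splitting in class $\rg$ needs closeness in the $\rg$-norm on $C_j$ up to $\partial D$. For finite $k$ this can be repaired: dilate on the convex set $C_j$, approximate on a neighbourhood, and use Cauchy estimates. For $\Lambda^r$ it cannot, because maps holomorphic near $C_j$ are not dense in $\Lambda^r$.

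What would close both holes inside your scheme is a splitting lemma that needs only ${\cal C}^0$-smallness. One way is the quadratically convergent iteration $\gamma_{n+1}=e^{-a_n}\gamma_n e^{b_n}$ with $\log\gamma_n=a_n-b_n$. This uses tame product and composition estimates, together with Henkin-type $\bar\partial$-operators that are bounded on every ${\cal C}^k$ and $\Lambda^r$ without loss. That would treat all classes $\rg$ uniformly. The paper's reduction is much shorter, but it rests on the extension theorem of \cite{Te}, which is specific to the smooth case. Without such a lemma, you are relying on \cite{Fo} as a black box for $\Lambda^r$, just as the paper does.
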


\begin{proof}

For $\rg\in \big\{\Lambda^r_{\rm loc}|\ r\in (0,\infty)\big\}$, the claim follows directly from \cite[Theorem 1.1]{Fo}, whereas for $\rg\in {\cal C}^k|\ k\in\N$,  it is a special case of \cite[Theorem 1.7]{DrFo}.

Suppose now that   $\rg= {\cal C}^\infty$.  Let $P$ be the underlying differentiable bundle of ${\cal P}$ and $s$  a continuous section of $P$. Choosing   an open neighbourhood $U$ of $\bar D$ in which $\bar D$ is a differentiable deformation retract, we obtain a  principal $G$-bundle $\Pi$ on $U$  extending $P$ and a continuous extension $\sigma$ of $s$. On the other hand, by    \cite[Theorem 1.1]{Te}, it follows  that ${\cal P}$ extends to a holomorphic principal $G$-bundle $ {\cal P}'$ on an open  $U'$ of $\bar D$ in $U$. We may assume that $U'$ is still strictly pseudoconvex, so Stein. By the Grauert's Oka principle on open Stein manifolds \cite{GrKe}, \cite{Ra}, it follows that $\sigma'\edf \sigma|_{U'}$ is homotopic to a holomorphic section $h'\in \Gamma(U',{\cal P}')$. Therefore, restricting to $X$, we obtain holomorphic section $h\in \Gamma(X,{\cal P})$ homotopic to $s=\sigma'|_X$. 

\end{proof} 
Using Proposition \ref{Oka}, we will prove two extension results for bumps needed in the proof of Theorem \ref{mainTh}:
 \begin{figure}[h]
\includegraphics[scale=0.7]{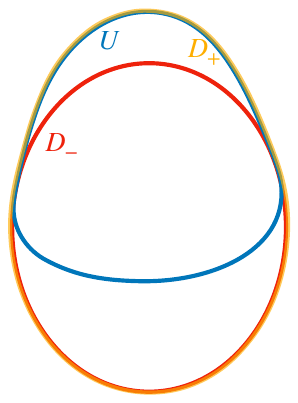}	
\caption{A convex bump}
 \label{bump}
\end{figure}

\begin{pr}\label{ExtBdlOnBump1}
\begin{enumerate}
\item Let $[D_-,U,D_+]$ be a convex bump in ${\cal X}$ in the sense of \cite[Definition 2.1]{HL} and ${\cal P}$ a holomorphic principal $G$-bundle  of class $\rg$ on $\bar D_-$. Then ${\cal P}$ 
   is the restriction  of a holomorphic principal $G$-bundle ${\cal Q}$ of class $\rg$ on $\bar D_+$ which is trivial on $\bar U$. 
\item Let $[D_-,U,D_+]$ be a special pseudoconvex bump in ${\cal X}$ in the sense of \cite[Definition 2.6]{HL} and ${\cal P}$ a holomorphic principal $G$-bundle of class $\rg$ on $\bar D_-$. Let $P'$ be a topological $G$-bundle on $\bar D_+$ whose restriction to $\bar D_-$ is isomorphic to
 the underlying topological bundle $P$ of ${\cal P}$. Then ${\cal P}$ is the restriction  of a holomorphic principal $G$-bundle ${\cal Q}$ of class $\rg$ on $\bar D_+$ which is trivial on $\bar U$ and whose underlying topological bundle is isomorphic to $P'$.  
\end{enumerate}

%

%
\end{pr}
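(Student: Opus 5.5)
The plan is to glue ${\cal P}$ on $\bar D_-$ with the trivial torsor on $\bar U$, using Corollary \ref{general-extension}. Recall the structure of a bump in the sense of \cite{HL}: $D_+=D_-\cup U$ (up to smoothing), and the set $\bar D_-\cap \bar U$ is a compact strictly pseudoconvex domain (with smooth boundary after rounding corners) contained in a Stein (in the convex case, convex coordinate) neighbourhood. Moreover, $\bar D_+\setminus \bar D_-$ is contained in $U$, away from $\bar D_-\setminus U$.

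I would take the relatively open cover of $\bar D_+$ given by $V_0\edf \bar D_+\setminus (\bar U\setminus D_-)$-type neighbourhood of $\bar D_-\setminus U$ and $V_1$ a neighbourhood of $\bar U\cap\bar D_+$. More precisely, I would choose a slightly smaller closed bump piece $\bar U'\subset U$ with $\bar D_+=\bar D_-\cup \bar U'$, set $V_0=\bar D_+\setminus \bar U'$ (which lies in $\bar D_-$) union the overlap, and set $V_1=\bar D_+\cap U$. The overlap $V_{01}$ then retracts onto $C\edf\bar D_-\cap\bar U'$, a compact strictly pseudoconvex domain in the Stein manifold $U$.

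\textbf{Step 1: trivialise ${\cal P}$ on $C$.} In case (1), $C$ is convex in a holomorphic chart, hence contractible, so the underlying topological bundle of ${\cal P}|_C$ is trivial. Proposition \ref{Oka} then gives a section $s\in\Gamma(C,{\cal P})$ of class $\rg$. In case (2), $P'$ restricted to $\bar U$ is the relevant object: $\bar U$ is contractible (or at least we may choose $P'$ trivial there; in \cite[Def.~2.6]{HL} $U$ is a Stein domain, and I would shrink to a contractible piece if needed). Otherwise I replace "trivial" by "$P'|_{\bar U}$ with a holomorphic structure obtained by Oka", but the statement claims triviality, so I expect to use that $U$ is a ball-like chart and $P'|_{\bar U}$ is trivial. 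The isomorphism $P'|_{C}\simeq P|_C$ transports the trivialising continuous section of $P'|_{\bar U}$ to a continuous section of $P|_C$, which by Proposition \ref{Oka} is homotopic to a section $s$ of ${\cal P}|_C$.

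\textbf{Step 2: extend $s$ to a neighbourhood and glue.} Since $s$ is a section over $C$ of class $\rg$, and since ${\cal P}$ is locally trivial, $s$ extends to a section over a relative neighbourhood $W$ of $C$ in $\bar D_-$. To see this, note that $C$ is compact, and near the part of $\partial C$ lying in the interior of $D_-$ the pieces are holomorphic. I expect this to be the main obstacle: Oka on $C$ (with boundary where $\partial U$ cuts $D_-$) only yields a section on $C$ itself, and it must be holomorphic of class $\rg$ on a neighbourhood in $\bar D_-$. I would try first to apply Proposition \ref{Oka} instead to a slightly larger strictly pseudoconvex domain $C'\supset C$, namely $\bar D_-\cap \bar U''$ with $U'\Subset U''\Subset U$, and then take $V_{01}$ to be the interior of $C'$ relative to $\bar D_+$ minus the new part, i.e. $V_{01}=V_0\cap V_1\subset C'$. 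With the section $s$ on $V_{01}$, define $\sigma:{\cal P}|_{V_{01}}\to {\cal O}^{\rg G}|_{V_{01}}$ by $s\mapsto e$, and form ${\cal Q}={\cal P}|_{V_0}\coprod_\sigma {\cal O}^{\rg\,G}_{V_1}$ by Corollary \ref{general-extension}.

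\textbf{Step 3: verify the properties.} ${\cal Q}$ restricts to ${\cal P}$ on $\bar D_-$ by gluing ${\cal P}|_{V_0}$ with ${\cal P}|_{\bar D_-\cap V_1}$ (trivialised by $s$), together with Remark \ref{restr-comm-with-gl}. ${\cal Q}$ is trivial on $V_1\supset \bar U'$. In case (2), the underlying topological bundle of ${\cal Q}$ is the gluing of $P|_{V_0}\simeq P'|_{V_0}$ with the trivial bundle $\simeq P'|_{V_1}$. The two clutchings agree up to homotopy on $V_{01}$ because $s$ was chosen homotopic to the transported section, so ${\cal Q}$ is topologically isomorphic to $P'$.
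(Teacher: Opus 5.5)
\textbf{There is a genuine gap: the cover you build does not exist.} Your strategy matches the paper's: trivialise ${\mathcal P}$ over the intersection by the Oka principle, then glue with the trivial torsor via Corollary \ref{general-extension}. But the ``main obstacle'' of Step 2 is only an artefact of your cover.

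The assertion $\bar D_+\setminus\bar D_-\subset U$ is false. $U\subset D_+$ is open, hence disjoint from $\partial D_+$, while $\bar D_+\setminus\bar D_-$ contains the new boundary $\partial D_+\setminus\bar D_-\subset\partial U$. Consequently:
\begin{enumerate}
\item No compact $\bar U'\subset U$ satisfies $\bar D_+=\bar D_-\cup\bar U'$.
\item Your $V_0=\bar D_+\setminus\bar U'$ is not contained in $\bar D_-$, so ${\mathcal P}|_{V_0}$ is undefined.
\item $V_1=\bar D_+\cap U=U$, together with any $V_0\subset\bar D_-$, fails to cover $\partial D_+\setminus\bar D_-$.
\end{enumerate}
Shrinking $U$ to $U'\Subset U''\Subset U$ causes a second problem. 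In general $\partial U'$ and $\partial U''$ cut $\partial D_-$ inside $U$, so $\bar D_-\cap\bar U'$ and $\bar D_-\cap\bar U''$ have corners. They are then not closures of smoothly bounded strictly pseudoconvex domains, and Proposition \ref{Oka} does not apply to them. Finally, the statement asks for ${\mathcal Q}$ to be trivial on the whole closed set $\bar U$, whereas your Step 3 only gives triviality on $V_1\supset\bar U'$.

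\textbf{The missing idea is the bump axiom.} You never use $\overline{D_-\setminus U}\cap\overline{U\setminus D_-}=\emptyset$. It makes $V_0=\bar D_+\setminus\overline{U\setminus D_-}=\inte_{\bar D_+}(\bar D_-)$ and $V_1=\bar D_+\setminus\overline{D_-\setminus U}=\inte_{\bar D_+}(\bar U)$ a relatively open cover of $\bar D_+$. Note that $V_1\not\subset U$, since it contains $\partial D_+\setminus\bar D_-$. By Lemma \ref{generalized-bump-lemma}, the sets $V_0\cap V_1$, $V_1\cap\bar D_-=\inte_{\bar D_-}(\bar D_-\cap\bar U)$ and $V_0\cap\bar U=\inte_{\bar U}(\bar D_-\cap\bar U)$ all lie in $\bar D_-\cap\bar U=\overline{D_-\cap U}$ (formula (\ref{4g})). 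In a chart, this set is the closure of a smoothly bounded strictly convex (resp. special strictly pseudoconvex) domain.

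So a single Oka section over $\overline{D_-\cap U}$ suffices, with no extension and no shrinking. It gives the clutching on $V_0\cap V_1$. Via Remark \ref{restr-comm-with-gl} and the universal property of Corollary \ref{general-extension}, applied to the induced covers $(V_0,V_1\cap\bar D_-)$ of $\bar D_-$ and $(V_0\cap\bar U,V_1)$ of $\bar U$, it also gives ${\mathcal Q}|_{\bar D_-}\simeq{\mathcal P}$ and ${\mathcal Q}|_{\bar U}\simeq{\mathcal O}^{\rg\,G}_{\bar U}$. In (2) no shrinking is needed either: $U$ is starshaped by definition of a special pseudoconvex bump, so $\bar U$ is contractible. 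With the corrected cover, your homotopy-of-clutching argument is exactly the paper's.
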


\begin{proof}
	
(1)  We know that $[D_-,U,D_+]$ is a convex bump in ${\cal X}$ (see fig. \ref{bump}), in particular $D_-$, $D_+$, $U$, $D_-\cap U$ are all strictly pseudoconvex domains in ${\cal X}$, which implies in particular that their closures   $\bar D_-$, $\bar D_+$, $\bar U$, $\overline{D_-\cap U}$ are all  compact complex manifolds with boundary in ${\cal X}$ in the sense of  Definition \ref{XcomplexincalX}.

We will use Corollary \ref{general-extension} by taking $X=\bar D^+$ and
 \begin{equation}
 \begin{split}
 V_0\edf\bar D_+\;\setminus\; \overline{U\;\setminus\; D_-}&=\inte_{\bar D_+}(\bar D_-),\\ 
 V_1\edf \bar D_+\;\setminus\; \overline{D_-\;\setminus\; U}&=\inte_{\bar D_+}(\bar U).	
 \end{split}	
 \end{equation}
 In these formulae and in the rest of the proof we use overline for closure in the ambient manifold ${\cal X}$.
 The second equalities on the right are the formulae (\ref{3g}), (\ref{3g'}) in Lemma \ref{generalized-bump-lemma} proved below. Note that $V_\pm$ are obviously relatively open subsets of $\bar D^+$ and they cover $\bar D^+$. Indeed, we have
\begin{equation}\label{V-cupV+}
\begin{split}
V_0\cup V_1=&(\bar D_+\;\setminus\; \overline{D_-\;\setminus\; U})\cup (\bar D_+\;\setminus\; \overline{U\;\setminus\; D_-})=\bar D_+\;\setminus\; \big(\overline{D_-\;\setminus\; U}\cap \overline{U\;\setminus\; D_-})\\
=&\bar D_+\;\setminus\; \emptyset =\bar D_+.	
\end{split}
\end{equation}
For the second equality we have used $\overline{D_-\;\setminus\; U}\cap \overline{U\;\setminus\; D_-}=\emptyset$, which is a property of a convex bump (see [Definition 2.1 (iii) p. 76] \cite{HL}).

Using the equalities (\ref{3g}), (\ref{3g'}), (\ref{4g}) given by Lemma \ref{generalized-bump-lemma} proved below, we have
\begin{equation}
\begin{split}
V_0\cap V_1&= \big(\bar D_+\;\setminus\; \overline{U\;\setminus\; D_-}\big)\cap \big(\bar D_+\;\setminus\; \overline{D_-\;\setminus\; U}\big)=\inte_{\bar D_+}(\bar D_-)\cap \inte_{\bar D_+}(\bar U) \\
&=\inte_{\bar D_+}(\bar D_-\cap \bar U)\stackrel{(\ref{4g})}{=}\inte_{\bar D_+}\overline{D_-\cap U}.	
\end{split}	
\end{equation}
%
%
We apply the restriction functor (see (see  Remark \ref{restrict-r-bdl-rem}, Definition \ref{restrict-r-bdl-def}) to the following two inclusions of complex manifolds with boundary in ${\cal X}$: 
$$V_0\cap V_1=\inte_{\bar D_+}\overline{D_-\cap U}\subset \overline{D_-\cap U}\subset \bar D_-.
$$

The restriction ${\cal P}|_{V_0\cap V_1}$ can be obtained as the restriction to $V_0\cap V_1$ of ${\cal P}|_{\overline{D_-\cap U}}$. 
%
%
We will now make use of the Oka principle (Proposition \ref{Oka})  to the latter restriction. By the definition of a convex bump, the image of $\overline{D_-\cap U}$ by a suitable holomorphic chart $h$ defined on a neighbourhood of $\bar U$ in ${\cal X}$ is strictly convex (in particular contractible and strictly pseudoconvex) in the Stein manifold $\C^n$. Recalling that any topological principal bundle on a contractible space is  trivial, it follows that the underlying topological bundle of ${\cal P}|_{\overline{D_-\cap U}}$  is trivial, so, by Proposition \ref{Oka} (the Oka principle), ${\cal P}|_{\overline{D_-\cap U}}$  is trivial.
Let $\sigma$ be a torsor isomorphism   
$${\cal P}|_{\overline{D_-\cap U}}\,\textmap{\simeq}\,{\cal O}^{r\,G}_{\overline{D_-\cap U}}\,,$$
 and let $\sigma_{01}$, $\sigma'_1$, $\sigma''_0$ be its restrictions to the relatively open subsets 
 $$
\begin{array}{ccccc}
V_0\cap V_1&=&\inte_{\bar D_+}(\bar D_-\cap \bar U)&\stackrel{(\ref{4g})}{=}&\inte_{\bar D_+}\overline{D_-\cap U}  ,\\ 
V'_1&\edf& \inte_{\bar D_-}(\bar D_-\cap   \bar U)&\stackrel{(\ref{4g})}{=}&\inte_{\bar D_-}\overline{D_-\cap   U}  ,\\ 
V''_0&\edf& \inte_{\bar U}(\bar D_-\cap\bar U)&\stackrel{(\ref{4g})}{=}&\inte_{\bar U}\overline{ D_-\cap\bar U} 	
\end{array}
$$
of $\overline{D_-\cap U}	$ respectively. Using the torsors ${\cal P}|_{V_0}$, ${\cal O}^{r\,G}_{V_1}$ and the isomorphism $\sigma_{01}$ we obtain by Corollary \ref{general-extension}  a holomorphic principal $G$-bundle  of class $\rg$
$$
{\cal Q}\edf {\cal P}|_{V_0}\coprod_{\sigma_{01}}{\cal O}^{r\,G}_{V_1}
$$
 on $\bar D_+$ whose restriction to $V_0=\inte_{\bar D_+}(\bar D_-)$ is canonically isomorphic to ${\cal P}|_{V_0}$ and which is trivial on $V_1=\inte_{\bar D_+}(\bar U)$. This does not complete the proof yet, because we need  isomorphisms 
$${\cal P}\simeq {\cal Q}|_{\bar D_-},\ {\cal O}^{r\,G}_{\bar U}\simeq{\cal Q}|_{\bar U},$$
on the whole closed subspaces $\bar D_-$, $\bar U$, whereas the construction of ${\cal Q}$ gives only   isomorphisms 
 $$\sigma_0:{\cal P}|_{\inte_{\bar D_+}(\bar D_-)}\textmap{\simeq}{\cal Q}|_{\inte_{\bar D_+}(\bar D_-)} ,\ \sigma_1:{\cal O}^{r\,G}_{\inte_{\bar D_+}(\bar U)}\textmap{\simeq}{\cal Q}|_{\inte_{\bar D_+}(\bar U)}$$
 satisfying the compatibility condition $\sigma_0|_{V_0\cap V_1}=\sigma_1|_{V_0\cap V_1}\circ\sigma_{01}$.
  
Consider the open cover   $(V_0\cap \bar D_-,V_1\cap \bar D_-)$ of $\bar D_-$ induced by $(V_0,V_1)$. We have $V_0\cap \bar D_-=V_0$ and 
\begin{align*}
V_1\cap \bar D_-=&\bar D_-\;\setminus\; \overline{D_-\;\setminus\; U}\overset{(\ref{2g})}{=}\bar D_-\;\setminus\; \overline{\bar D_-\;\setminus\; \bar U}=\complement_{\bar D_-}\overline{\bar D_-\Setminus \bar U}=\inte_{\bar D_-}\big(\complement_{\bar D_-}(\bar D_-\Setminus \bar U)\big)=\\
=&\inte_{\bar D_-}\big(\complement_{\bar D_-}(\bar D_-\Setminus (\bar D_-\cap \bar U))\big)=\inte_{\bar D_-}(\bar D_-\cap \bar U)=V'_1.
\end{align*}

Since restriction commutes with gluing (Remark \ref{restr-comm-with-gl}), it follows that the restriction ${\cal Q}'\edf {\cal Q}|_{\bar D_-}$ can be obtained by gluing, more precisely by applying  Corollary \ref{general-extension} to the open cover $(V_0\cap \bar D_-,V_1\cap \bar D_-)=(V_0,V'_1)$ of $\bar D_-$ and to the restrictions 
$$({\cal P}|_{V_0})|_{V_0\cap\bar D_-}={\cal P}|_{V_0},\ {\cal O}^{r\,G}_{V_1}|_{V'_1}={\cal O}^{r\,G}_{V'_1},\ \sigma_{01}'\edf \sigma_{01}|_{V_0\cap V_1\cap \bar D_-}=\sigma_{01}$$
 of ${\cal P}|_{V_0}$, ${\cal O}^{r\,G}_{V_1}$ and $\sigma_{01}$ to $V_0\cap \bar D_-=V_0$, $V_1\cap \bar D_-=V'_1$,  and $V_0\cap V_1\cap\bar D_-=V_0\cap V_1$ respectively. In other words we have a canonical identification
 $$
 {\cal Q}'={\cal P}|_{V_0}\coprod_{\sigma_{01}}{\cal O}^{r\,G}_{V'_1}.
 $$

We define an isomorphism $\theta':{\cal Q}'\textmap{\simeq}{\cal P}$ by applying the universal property of the torsor associated with the gluing data $({\cal P}|_{V_0}\,,\,{\cal O}^{r\,G}_{V'_1}\,,\,\sigma_{01})$ (see the second part of the conclusion of Corollary \ref{general-extension}), and choosing 
$$\theta'_0=\id_{{\cal P}|_{V_0}},\ \theta'_1=\sigma'^{-1}_1.$$
The required compatibility condition $\theta'_0|_{V'_0\cap V'_1}=\theta'_1|_{V'_0\cap V'_1}\circ \sigma_{01}$  is satisfied, because, by definition, the isomomorphism $\sigma'_1$ coincides with $\sigma_{01}$ on $V_0\cap V_1$.
This shows that ${\cal Q}'\edf {\cal Q}|_{\bar D_-}$ is isomorphic to ${\cal P}$ as claimed.

Similarly, consider  the open cover $(V_0\cap \bar U,V_1\cap \bar U)$  of $\bar U$ induced by $(V_0,V_1)$. We have  $V_1\cap \bar U=V_1$ and
\begin{align*}
V_0\cap \bar U=&\bar U\;\setminus\; \overline{U\;\setminus\; D_-}\overset{(\ref{1g})}{=}\bar U\;\setminus\; \overline{\bar U\;\setminus\; \bar D_-}=\complement_{\bar U}\,\overline{\bar U\;\setminus\; \bar D_-}=\inte_{\bar U}(\complement_{\bar U}(\bar U\;\setminus\; \bar D_-))=\\
=&\inte_{\bar U}(\complement_{\bar U}(\bar U\;\setminus\; (\bar D_-\cap\bar U)))=\inte_{\bar U}(\bar D_-\cap\bar U)=V''_0.
\end{align*}

The restriction ${\cal Q}''\edf {\cal Q}|_{\bar U}$ can be obtained by applying  Corollary \ref{general-extension} to the open cover $(V_0\cap\bar U,V_1\cap\bar U)$ of $\bar U$ and to the restrictions 
$$({\cal P}|_{V_0})|_{V_0\cap\bar U}={\cal P}|_{V''_0},\ {\cal O}^{r\,G}_{V_1}|_{V_1\cap\bar U}={\cal O}^{r\,G}_{V_1},\ \sigma_{01}''\edf \sigma_{01}|_{V_0\cap V_1\cap\bar U}=\sigma_{01}$$
 of ${\cal P}|_{V_0}$, ${\cal O}^{r\,G}_{V_1}$ and $\sigma_{01}$ to $V_0\cap\bar U=V''_0$, $V_1\cap\bar U=V_1$,  and $V_0\cap V_1\cap\bar U=V_0\cap V_1$ respectively. Therefore
 $$
 {\cal Q}''={\cal P}|_{V''_0}\coprod_{\sigma_{01}}{\cal O}^{r\,G}_{V_1}.
 $$

We define an isomorphism $\theta'':{\cal Q}''\textmap{\simeq}{\cal O}^{r\,G}_{\bar U}$ using   the universal property of the torsor associated with the gluing data $({\cal P}|_{V''_0}\,,\,{\cal O}^{r\,G}_{V_1}\,,\,\sigma_{01})$  and choosing this time
$$\theta''_0=\sigma''_0,\ \theta''_1=\id_{{\cal O}^{r\,G}_{V_1}}.$$
The required compatibility condition $\theta''_0|_{V_0\cap V_1}=\theta''_1|_{V_0\cap V_1}\circ \sigma_{01}$  is satisfied, because  $\sigma''_0$ coincides with $\sigma_{01}$ on $V'_0\cap V'_1=V_0\cap V_1$.
This shows that ${\cal Q}''\edf {\cal Q}|_{\bar U}$ is isomorphic to ${\cal O}^{r\,G}_{\bar U}$ as claimed.
\vspace{2mm}\\
(2) By definition of a special pseudoconvex bump, we know that $U$ is starshaped, in particular contractible. Since a compact manifold with boundary is homotopically equivalent to its interior, it follows that $\bar U$ is contractible too, so $P'|_{\bar U}$ is trivial. Let 
$$t'_{\bar U}:P'|_{\bar U}\to \bar U\times G$$ 
a trivialisation of $P'|_{\bar U}$ and 
$$t'_{\overline{D_-\cap U}}:P'|_{\overline{D_-\cap U}} \textmap{\simeq} \overline{D_-\cap U}\times G$$
 its restriction to $\overline{D_-\cap U}$. Fix a bundle isomorphism   $u: P\textmap{\simeq} P'|_{\bar D_-}$ and let 
\begin{equation}\label{t'->t}
t_{\overline{D_-\cap U}}\edf  t'_{\overline{D_-\cap U}}\circ u|_{\overline{D_-\cap U}}:P|_{\overline{D_-\cap U}}\textmap{\simeq}\overline{D_-\cap U}\times G 	
\end{equation}
 the corresponding trivialisation of $P|_{\overline{D_-\cap U}}$. By the definition of a special pesudoconvex bump, we know that, via a suitable holomorphic chart defined in an open neighbourhood of $\bar U$, $D_-\cap U$ can be identified with (in general not necessarily connected) special strictly pseudoconvex domain in $\C^n$. Therefore  the Oka principle applies, and yields a trivialisation
 $$\sigma=\sigma_{\overline{D_-\cap U}}:{\cal P}|_{\overline{D_-\cap U}}\textmap{\simeq}{\cal O}^{r\,G}_{\overline{D_-\cap U}}$$
 of the torsor ${\cal P}|_{\overline{D_-\cap U}}$ whose associated topological trivialisation
 $$s=s_{\overline{D_-\cap U}}:P|_{\overline{D_-\cap U}} \textmap{\simeq}\overline{D_-\cap U}\times G $$
  is homotopic with $t\edf t_{\overline{D_-\cap U}}$. Here we have used the standard identification between trivialisations and sections.
  
  Denoting by   $\sigma_{01}$ the restriction of $\sigma$ to $V_0\cap V_1$, we obtain  as in the first part of the lemma an extension 
  $${\cal Q}\edf {\cal P}|_{V_0}\coprod_{\sigma_{01}}{\cal O}^{r\,G}_{V_1}$$
   of ${\cal P}$ to $\bar D_+$ which is trivial on $\bar U$. The underlying topological bundle $Q$ of ${\cal Q}$ is
 $$
 Q=P|_{V_0}\coprod_{s_{01}}(V_1\times G),
 $$
 where $s_{01}$ stands for the restriction of $s$ to $V_0\cap V_1$. Since $s$ is homotopic to $t$, it follows that $s_{01}$ is homotopic to the restriction $t_{01}$ of $t$ to $V_0\cap V_1$, so
 $$Q\simeq P|_{V_0}\coprod_{t_{01}}(V_1\times G).$$
 We define an isomorphism $P|_{V_0}\coprod_{t_{01}}(V_1\times G)\textmap{\simeq} P'$  using again the universal property of the sheaf associated with gluing data, and taking
 $$
\theta_0: P|_{V_0}\textmap{\simeq} P'|_{V_0},\ \theta_1:V_1\times G\textmap{\simeq} P'_{V_1}
 $$
 defined by
  $$
 \theta_0\edf u|_{V_0},\ \theta_1=t'|_{V_1}^{-1},
 $$
 where $t'\edf t'_{\overline{D_-\cap U}}$.
The compatibility condition  $\theta_0|_{V_0\cap V_1}=\theta_1|_{V_0\cap V_1}\circ t_{01}$ is satisfied by the definition of $t$ in terms of $t'$ (formula (\ref{t'->t})).
 %
\end{proof}
More generally,

\begin{pr} \label{ExtBdlOnBump2}
\begin{enumerate}
\item Let $[D_-,U,D_+]$ be a convex bump in ${\cal X}$, $K\subset D_-\setminus \bar U$ a compact set,   and ${\cal P}$ a holomorphic principal $G$-bundle of class $\rg$ on $\bar D_-\setminus K$. Then ${\cal P}$ 
   is the restriction  of a holomorphic principal $G$-bundle ${\cal Q}$ of class $\rg$ on $\bar D_+\setminus K$ which is trivial on $\bar U$. 
\item Let $[D_-,U,D_+]$ be a special pseudoconvex bump in ${\cal X}$, $K\subset D_-\setminus \bar U$ a compact set and ${\cal P}$ a holomorphic principal $G$-bundle of class $\rg$ on $\bar D_-\setminus K$. Let $P'$ be a topological $G$-bundle on $\bar D_+\setminus K$ whose restriction to $\bar D_-\setminus K$ is isomorphic to
 the underlying topological bundle $P$ of ${\cal P}$. Then ${\cal P}$ is the restriction  of a holomorphic principal $G$-bundle ${\cal Q}$ of class $\rg$ on $\bar D_+\setminus K$ which is trivial on $\bar U$ and whose underlying topological bundle is isomorphic to $P'$.  
\end{enumerate}

\end{pr}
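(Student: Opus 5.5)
The plan is to repeat the proof of Proposition \ref{ExtBdlOnBump1} word for word, with $\bar D_-$ replaced by $\bar D_-\setminus K$ and $\bar D_+$ replaced by $\bar D_+\setminus K$. The one new point to check is that $K$ does not interfere with the gluing. Since $K\subset D_-\setminus\bar U$ is compact, it is disjoint from $\bar U$, and hence from $\overline{D_-\cap U}$. So $\bar D_+\setminus K$, $\bar D_-\setminus K$ and $\bar U$ are all complex manifolds with boundary in ${\cal X}$ (relatively open pieces of such, by Remark \ref{subman-with-bd}). Their boundaries agree with those of $\bar D_\pm$, because $K$ lies in the interior $D_-$. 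In addition $\overline{D_-\cap U}\subset\bar D_-\setminus K$, so the restriction ${\cal P}|_{\overline{D_-\cap U}}$ makes sense.

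Next I would set up the cover of $X\edf\bar D_+\setminus K$:
$$V_0^K\edf V_0\setminus K=\inte_{\bar D_+}(\bar D_-)\setminus K,\qquad V_1^K\edf V_1=\inte_{\bar D_+}(\bar U).$$
Both sets are relatively open in $\bar D_+\setminus K$; the first because $K$ is closed. They cover it, by (\ref{V-cupV+}). Since $K\cap\bar U=\emptyset$, their intersection is $V_0\cap V_1=\inte_{\bar D_+}\overline{D_-\cap U}$, unchanged from before. For part (1), Proposition \ref{Oka} applies exactly as before, because $\overline{D_-\cap U}$ is strictly convex in a chart and hence contractible. It gives a trivialisation $\sigma$ of ${\cal P}|_{\overline{D_-\cap U}}$. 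I then define ${\cal Q}\edf{\cal P}|_{V_0^K}\coprod_{\sigma_{01}}{\cal O}^{\rg\,G}_{V_1}$ via Corollary \ref{general-extension}.

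The identifications ${\cal Q}|_{\bar D_-\setminus K}\simeq{\cal P}$ and ${\cal Q}|_{\bar U}\simeq{\cal O}^{\rg\,G}_{\bar U}$ follow from Remark \ref{restr-comm-with-gl} and the universal property, as in the previous proof. To get them, I need to redo the set computations with $K$ removed:
$$V_1\cap(\bar D_-\setminus K)=V'_1,\qquad V_0^K\cap(\bar D_-\setminus K)=V_0^K,\qquad V_0^K\cap\bar U=V''_0.$$
These hold because $V'_1$ and $V''_0$ lie in $\bar U$, which misses $K$. The compatibility conditions for $\theta',\theta''$ then read exactly as before.

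For part (2), $\bar U$ is contractible, so $P'|_{\bar U}$ is trivial. Since $\overline{D_-\cap U}\subset\bar D_-\setminus K$, I can fix $u:P\simeq P'|_{\bar D_-\setminus K}$ and define $t$ by (\ref{t'->t}). The Oka principle then gives a holomorphic trivialisation $\sigma$ whose underlying topological trivialisation $s$ is homotopic to $t$. The topological comparison $Q\simeq P|_{V_0^K}\coprod_{t_{01}}(V_1\times G)\simeq P'$ goes through unchanged, with $\theta_0=u|_{V_0^K}$ and $\theta_1=t'|_{V_1}^{-1}$.

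The main obstacle is only bookkeeping. One has to verify the identities (\ref{1g})--(\ref{4g}) of Lemma \ref{generalized-bump-lemma} in the relative form $\inte_{\bar D_+\setminus K}(\bar D_-\setminus K)=\inte_{\bar D_+}(\bar D_-)\setminus K$. This holds because removing a closed set disjoint from $\bar U$ commutes with relative interiors taken near $\bar U$. One also has to check that the homotopy replacing $s_{01}$ by $t_{01}$ still yields isomorphic glued bundles on the non-compact base $\bar D_+\setminus K$. This is standard: the homotopy lives on $V_0\cap V_1$, which is away from $K$.
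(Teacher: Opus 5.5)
Your proposal is correct and follows essentially the paper's route. The paper also reruns the proof of Proposition \ref{ExtBdlOnBump1} verbatim, but does the bookkeeping by replacing the ambient manifold ${\cal X}$ with ${\cal X}\setminus K$, so that Lemma \ref{generalized-bump-lemma} applies directly to $D_\pm\setminus K$ and $U$, whereas you remove $K$ from $V_0$ by hand; in both cases the key point is $K\cap\bar U=\emptyset$, which leaves the overlap $V_0\cap V_1$ and the Oka-principle step on $\overline{D_-\cap U}$ unchanged (and your relative-interior identity in fact holds simply because $K$ is closed, without needing disjointness from $\bar U$).
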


\begin{proof}
  In the proof of Proposition \ref{ExtBdlOnBump2} we replace everywhere ${\cal X}$ by ${\cal X}^K\edf{\cal X}\setminus K$, $D_\pm$ by  $D_\pm^K\edf D_\pm\setminus K$ and closures with respect to ${\cal X}$ by closures with respect to ${\cal X}^K$.

 The closure of $U$ with respect to ${\cal X}^K$ coincides with  its closure with respect to ${\cal X}$, so it remains compact; it is a closed manifold with boundary in both ${\cal X}$ and ${\cal X}^K$. Using overline for closures with respect to ${\cal X}^K$, we see that $\bar D_\pm^K$ are (in general non-compact) but still closed manifolds with boundary in ${\cal X}^K$. Moreover we still have   $D_+^K=D_-^K\cup U$. Therefore Lemma \ref{generalized-bump-lemma} applies to ${\cal X}^K$ and its open subsets $D_\pm^K$, $U$. We still have 
 $$
\overline{D_-^K\setminus U}\cap \overline{U \setminus D_-^K}=\emptyset,
$$
because $D_-^K\setminus U\subset D_-\setminus U$, $U \setminus D_-^K=U\setminus D_-$, and closures with respect to ${\cal X}^K$ are obtained by intersecting with ${\cal X}^K$ closures with respect to ${\cal X}$. This gives the analogue of formula (\ref{V-cupV+}) for the new manifolds ${\cal X}^K$, $D_\pm^K$, $U$ and the relatively open sets $V_\pm^K\subset \bar D_+^K$ defined similarly.

  Moreover $D_-^K\cap U=D^-\cap U$ and its closure in ${\cal X}$ is contained in $\bar U$ so in ${\cal X}^K$, so it coincides with its closure $\overline{D_-^K\cap U}$ in ${\cal X}^K$. Therefore the same arguments based on the Oka principle (Proposition \ref{Oka}) apply. With these remarks, the proof of Proposition \ref{ExtBdlOnBump1} applies  verbatim to Proposition \ref{ExtBdlOnBump2}.



 \end{proof}

\begin{lm}\label{generalized-bump-lemma} Let ${\cal X}$ be a differentiable manifold, and let $D_-$, $U\subset {\cal X}$ be open subsets such that $\bar D_-$, $\bar U$ are manifolds with boundary in ${\cal X}$ in the sense of Definition \ref{XincalX}. Put $D_+\edf D_-\cup U$.
 
\begin{enumerate}
\item 	The following equalities hold:
\begin{align}
\overline{U\;\setminus\;D_-}=\overline{U\;\setminus\; \bar D_-}&=\overline{\bar U\;\setminus\; \bar D_-}\,. \label{1g}\\
\overline{D_-\;\setminus\; U}=\overline{D_-\;\setminus\; \bar U}&=\overline{\bar D_-\;\setminus\; \bar U}\,.\label{2g}
\end{align}
\begin{align}
\bar D_+\;\setminus\; \overline{U\;\setminus\; D_-}&=\inte_{\bar D_+}(\bar D_-)\,.\label{3g}\\
\bar D_+\;\setminus\; \overline{D_-\;\setminus\; U}&=\inte_{\bar D_+}(\bar U)\,.\label{3g'}	
\end{align}
\item  If $\bar D_+$ is also a manifold with boundary in ${\cal X}$, then
\begin{equation}
\bar U\cap\bar D_-=\overline{U\cap D_-}\,. \label{4g}	
\end{equation}
\end{enumerate}
\end{lm}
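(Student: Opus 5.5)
The whole lemma rests on one topological fact about a manifold with boundary $M$ in ${\cal X}$ in the sense of Definition \ref{XincalX}: $\overline{\inte(M)}\supset M$, so every point of $M$, including a boundary point, is a limit of interior points. Applied to $\bar D_-$ and $\bar U$, this gives $\overline{D_-}\subset\overline{\inte(\bar D_-)}$. Since $D_-$ is open and contained in $\bar D_-$, we have $D_-\subset\inte(\bar D_-)$. Moreover every point of $\bar D_-$ is locally on one side of the hypersurface $\partial\bar D_-$, so $\inte(\bar D_-)=D_-$, and likewise $\inte(\bar U)=U$. I would isolate this identity as a first step.

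For (\ref{1g}), the inclusions $\overline{U\setminus \bar D_-}\subset\overline{U\setminus D_-}\subset\overline{\bar U\setminus \bar D_-}$ hold if one first replaces $\bar U\setminus\bar D_-$ by its closure as the largest set. The two reverse inclusions are then:
\begin{itemize}
\item \emph{$\overline{U\setminus D_-}\subset\overline{U\setminus\bar D_-}$.} Take $x\in U\setminus D_-$. If $x\notin\bar D_-$ there is nothing to prove. If $x\in\partial\bar D_-$, then in a boundary chart of $\bar D_-$ around $x$ chosen inside $U$, points on the exterior side of the hypersurface lie in $U\setminus\bar D_-$ and accumulate at $x$.
\item \emph{$\bar U\setminus\bar D_-\subset\overline{U\setminus\bar D_-}$.} A point of $\bar U$ outside the closed set $\bar D_-$ has a neighbourhood disjoint from $\bar D_-$, and it is a limit of points of $U$ by the first step.
\end{itemize}
Formula (\ref{2g}) is the same argument with the roles of $D_-$ and $U$ exchanged.

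For (\ref{3g}), I will show that $\bar D_+\setminus\overline{U\setminus D_-}$ equals the set of points of $\bar D_+$ having a neighbourhood $W$ in ${\cal X}$ with $W\cap\bar D_+\subset\bar D_-$.
\begin{itemize}
\item \emph{Points with such a $W$ lie in the left-hand side.} The set $W\cap(U\setminus D_-)$ is contained in $\bar D_-\cap U\setminus D_-\subset\partial\bar D_-$. By the one-sided chart argument above, an open set meeting $\partial\bar D_-$ inside $U$ also meets $U\setminus\bar D_-$, which is impossible here. Hence $W$ misses $U\setminus D_-$, and so $x\notin\overline{U\setminus D_-}$.
\item \emph{Points of the left-hand side have such a $W$.} If $x\in\bar D_+$ is not in $\overline{U\setminus D_-}$, pick $W$ open avoiding $U\setminus D_-$. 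Then $W\cap D_+\subset D_-$, so $W\cap\bar D_+\subset\overline{W\cap D_+}\subset\bar D_-$, using that $W$ is open.
\end{itemize}
This is exactly $\inte_{\bar D_+}(\bar D_-)$. Formula (\ref{3g'}) follows symmetrically using (\ref{2g}).

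For (\ref{4g}), the inclusion $\supset$ is trivial. The converse, with $\bar D_+$ a manifold with boundary, is the step I expect to be the main obstacle. Take $x\in\bar U\cap\bar D_-$ and suppose some neighbourhood $W$ of $x$ has $W\cap U\cap D_-=\emptyset$. Then $W\cap U$ and $W\cap D_-$ are disjoint open sets, both accumulating at $x$. Since $\bar U$ and $\bar D_-$ are locally half-spaces, for $x$ in the interior of $\bar D_+$ this contradicts connectivity considerations. For $x\in\partial\bar D_+$, the union $W\cap D_+$ would have to be locally a connected half-ball, $\inte(\bar D_+)=D_+$, while being the disjoint union of two nonempty open sets, which contradicts the connectedness of a half-ball. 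I would check the interior case via the same half-ball connectedness, applied to a small coordinate ball around $x$ whose intersection with $D_+$ is a ball or a half-ball.
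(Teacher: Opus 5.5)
\textbf{The false step.} Your ``first step'' does not hold: being locally on one side of $\partial\bar D_-$ does not give $\inte(\bar D_-)=D_-$. Take $D_-=\{0<|z|<1\}\subset\C$. Its closure $\bar D_-$ is the closed disc, which is a manifold with boundary in $\C$, yet $0\in\inte(\bar D_-)\setminus D_-$. No argument can repair this, because the statement as written fails for such data. With $U=\{|z|<1/2\}$ one gets $\overline{U\setminus D_-}=\{0\}$ but $\overline{U\setminus\bar D_-}=\emptyset$, contradicting (\ref{1g}) and (\ref{3g}). So $D_-=\inte(\bar D_-)$ and $U=\inte(\bar U)$ must be added as hypotheses. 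The paper's proof uses them tacitly too, for instance when it writes $U\setminus D_-=(U\setminus\bar D_-)\cup(U\cap\partial\bar D_-)$. They do hold in the application, where these sets are smoothly bounded domains.

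For (\ref{4g}) you additionally need $D_+=\inte(\bar D_+)$. This does not follow from the other two identities, even when $\bar D_+$ is a manifold with boundary. Take $D_-=\{1<|z|<2\}$ and $U=\{|z|<1\}$. Then $\bar D_+=\{|z|\le 2\}$, but $\bar U\cap\bar D_-=\{|z|=1\}$ while $U\cap D_-=\emptyset$. This is exactly where your ``connectivity considerations'' at points of $\inte(\bar D_+)$ break down: a point with $|z|=1$ has no small ball contained in $D_+$. The paper uses the needed identity silently, in the form $\bar D_+\setminus D_+=\partial\bar D_+$.

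\textbf{Comparison, once the three identities are assumed.} Your arguments are then correct, and in places they differ from the paper's.

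For (\ref{1g}), the first equality uses the same one-sided chart argument as the paper. For the second you simply note that points of $\bar U$ are limits of points of $U$, which is more direct than the paper's detour through the manifold with boundary $\bar U\cap\complement_{\cal X}(\bar D_-)$. One small slip: the inclusion $\overline{U\setminus D_-}\subset\overline{\bar U\setminus\bar D_-}$ that you list as preliminary is not trivial. You do not need it, though: the trivial inclusions of $\overline{U\setminus\bar D_-}$ into the other two sets, together with your two reverse inclusions, suffice.

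For (\ref{3g}), the paper argues purely set-theoretically from (\ref{1g}). Since $\bar D_+=\bar D_-\cup\bar U$, one has $\complement_{\bar D_+}(\bar U\setminus\bar D_-)=\bar D_-$, and the complement of a closure is the interior of the complement. Your neighbourhood characterisation of $\inte_{\bar D_+}(\bar D_-)$ also works, but it re-runs the chart argument, which (\ref{1g}) already makes unnecessary: $W\cap\bar D_+\subset\bar D_-$ gives $W\cap(U\setminus\bar D_-)=\emptyset$, hence $x\notin\overline{U\setminus D_-}$.

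For (\ref{4g}), the paper places $x_0\in\partial\bar U\cap\partial\bar D_-$ on $\partial\bar D_+$ and follows a curve transversal to $\partial\bar D_+$, asserting that it enters both $D_-$ and $U$. That assertion silently uses that $\partial\bar D_-$ and $\partial\bar U$ are tangent to $\partial\bar D_+$ at $x_0$. Your argument avoids this and is cleaner: the connected open half-ball $B\cap D_+$ cannot be the disjoint union of the nonempty open sets $B\cap D_-$ and $B\cap U$. At points of $\inte(\bar D_+)=D_+$ no connectivity is needed at all. If, say, $x\in D_-$, then $D_-$ is a neighbourhood of $x\in\bar U$, so $x\in\overline{U\cap D_-}$.
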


\begin{proof}
(1) For the first equality in (\ref{1g}):   We have
$$
U\;\setminus\; D_-=U\cap \complement_{\cal X}(D_-)=U\cap \big(\complement_{\cal X}(\bar D_-)\cup\partial\bar D_-)=(U\;\setminus\;\bar D_-)\cup (U\cap\partial\bar D_-). 
$$
Therefore, it suffices to prove that any point of  $U\cap\partial\bar D_-$ is adherent to $U\setminus\bar D_-$. Since $\bar D_-$ is a closed manifold with boundary in ${\cal X}$, by Remark \ref{subman-with-bd} (\ref{2nd}) we know that $\complement_{\cal X}(D_-)$ is a manifold with boundary in ${\cal X}$ whose boundary is
$$
\partial\big(\complement_{\cal X}(D_-)\big)=\partial(\bar D_-).
$$
and whose interior is
$$
\inte\big(\complement_{\cal X}(D_-)\big)=\complement_{\cal X}(\bar D_-).
$$
By Remark \ref{subman-with-bd} (\ref{1st}), it follows that $\complement_{\cal X}(D_-)\cap U$ is a manifold with boundary in $U$ whose boundary is $\partial(\bar D_-)\cap U$ and whose interior is $\complement_{\cal X}(\bar D_-)\cap U=U\Setminus\bar D_-$. Since any boundary point of a manifold with boundary is adherent to its interior, the claim follows. 

For the second equality in (\ref{1g}): We have
$$
\bar U\Setminus\bar D_-=\bar U\cap\complement_{\cal X}(\bar D_-)=\big(U\cap\complement_{\cal X}(\bar D_-)\big)\cup \big(\partial\bar U\cap\complement_{\cal X}(\bar D_-)\big)=(U\Setminus\bar D_-)\cup\big(\partial\bar U\Setminus \bar D_-\big) .
$$
Therefore, it suffices to prove that any point of $\partial\bar U\Setminus \bar D_-=\partial\bar U\cap\complement_{\cal X}(\bar D_-)$ is adherent to $U\Setminus\bar D_-=U\cap\complement_{\cal X}(\bar D_-)$. But, by Remark \ref{subman-with-bd} (\ref{1st}), $\partial\bar U\cap\complement_{\cal X}(\bar D_-)$ is the boundary of the manifold with boundary $\bar U\cap\complement_{\cal X}(\bar D_-)$ whose interior is $U\cap\complement_{\cal X}(\bar D_-)$, so the claim follows as in the proof of the first equality.
\\ \\
For (\ref{2g}): it suffices to interchange to roles of $D_-$ and $U$ in the proof of  (\ref{1g}).
\\ \\
For (\ref{3g}), (\ref{3g'}): Since  $D_+=D_-\cup U$, we have
$$\bar D_+=\bar D_-\cup\bar U=(\bar U\;\setminus\;\bar D_-)\cup \bar D_-=(\bar D_-\;\setminus\;\bar U)\cup \bar U.
$$
This shows that
\begin{equation}\label{complements}
\complement_{\bar D^+}(\bar U\;\setminus\;\bar D_-)=\bar D_-,\ \complement_{\bar D^+}(\bar D_-\;\setminus\;\bar U)=\bar U,
\end{equation}
hence, taking into account (\ref{1g}),
$$
\bar D_+\;\setminus\; \overline{U\;\setminus\; D_-}=\bar D_+\;\setminus\; \overline{\bar U\;\setminus\; \bar D_-}=\complement_{\bar D_+}\overline{\bar U\Setminus \bar D_-}=\inte_{\bar D_+}\big(\complement_{\bar D_+}(\bar U\Setminus \bar D_-)\big)=\inte_{\bar D_+}(\bar D_-),
$$
where, for the last equality, we have used (\ref{complements}).
Similarly, taking into account (\ref{2g})  and (\ref{complements}),
$$
\bar D_+\;\setminus\; \overline{D_-\;\setminus\; U}=\bar D_+\;\setminus\; \overline{\bar D_-\;\setminus\; \bar U}=\complement_{\bar D_+}\overline{\bar D_-\Setminus \bar U}=\inte_{\bar D_+}\big(\complement_{\bar D_+}(\bar D_-\Setminus \bar U)\big)=\inte_{\bar D_+}(\bar U).
$$
(2) The inclusion $\overline{U\cap D_-}\subset \bar U\cap   \bar D_-$ is clear. For the opposite inclusion: 
$$
\bar U\cap   \bar D_-=(U\cup\partial\bar U)\cap(D_-\cup\partial\bar D_-)=(U\cap D_-)\cup (U\cap\partial\bar D_-)\cup(\partial\bar U\cap D_-)\cup(\partial\bar U\cap\partial\bar D_-). 
$$
By  Remark \ref{subman-with-bd}, $U\cap\partial\bar D_-$ is the boundary of the manifold with boundary $U\cap\bar D_-$ whose interior is $U\cap D_-$, and $\partial\bar U\cap D_-$ is the boundary of the manifold with boundary $\bar U\cap D_-$ whose interior is $U\cap D_-$. Therefore, any point of $U\cap\partial\bar D_-$ and any point of  $\partial\bar U\cap D_-$ is adherent to $U\cap D_-$. So we only have to check that any point   $x\in \partial\bar U\cap\partial\bar D_-$ is also  adherent to   $U\cap D_-$.

We have $D_-\cup U=D_+$, so $\bar D_-\cup \bar U=\overline{D_-\cup U}=\bar D_+$, so $\bar D_-\subset\bar D_+$, $\bar U\subset\bar D_+$. Therefore, recalling that in this part of the lemma we have assumed that $\bar D_+$ is also a manifold with boundary in ${\cal X}$, we obtain
\begin{equation*}
\begin{split}
\partial\bar U\cap\partial\bar D_-=(\bar U\Setminus U)\cap (\bar D_-\Setminus D_-)\subset &\,(\bar D_+\Setminus U)\cap (\bar D_+\Setminus D_-)=\bar D_+\Setminus (U\cup D_-)=\\
&\,\bar D_+\setminus D_+=\partial\bar D_+.	
\end{split}
\end{equation*}
Therefore any point $x_0\in \partial\bar U\cap\partial\bar D_-$ belongs to $\partial\bar D_+$. Let $\gamma:(-1,1)\to {\cal X}$ be a smooth embedding such that 
\begin{itemize}
\item $\gamma(0)=x_0$ and $\dot\gamma(0)$ is transversal to the hypersurface $\partial\bar D_+$ at $x_0$,
\item  $\im(\gamma)\cap\partial\bar D_+=\{x_0\}$.
\item $\gamma((-1,0))\subset \complement_{\cal X}(\bar D_+)$	, $\gamma((0,1))\subset D_+$.
\end{itemize}

Since $\gamma((-1,0))\subset \complement_{\cal X}(\bar D_+)=\complement_{\cal X}(\bar D_-\cup\bar U)=\complement_{\cal X}(\bar D_-)\cap \complement_{\cal X}(\bar U)$, it follows   that the curve $\gamma((-1,0))$ is exterior to both manifolds with boundary $\bar D_-$, $\bar U$. Therefore, for sufficiently small $\varepsilon>0$, $\gamma((0,\varepsilon))$ must be interior to these manifolds with boundary so, for sufficiently large $n\in\N^*$, $\gamma\big(\frac{1}{n}\big)\in U\cap D_-$. On the other hand $\lim_{n\to\infty} \gamma\big(\frac{1}{n}\big)=x_0$, so $x_0$ is adherent to $ U\cap D_-$, as claimed.

\end{proof}

\begin{proof} (of Theorem \ref{mainTh}): 
(1) Assume that $X_0\Subset Z_0$ is a non-critical strictly pseudoconvex extension in ${\cal X}$. 

Let $V'\Subset V''\Subset X_0$ be open neighbourhoods with compact closures of $K$ in $X_0$ and note that $({\cal X}\setminus \bar V',V'')$ is an open cover of ${\cal X}$.

By \cite[Lemma 2.2 p. 76]{HL} applied to the non-critical strictly pseudoconvex extension $(X_0,Z_0)$ and the open cover $({\cal X}\setminus \bar V',V'')$, we know that there exists strictly pseudoconvex domains $D_j$ ($0\leq j\leq k+1$), $U_j$, ($0\leq j\leq k$) in ${\cal X}$ such that
\begin{itemize}
\item $X_0 = D_0$ and $Z_0=D_{k+1}$,	
\item  For any $0\leq j\leq k$, the triple $[D_j,U_j,D_{j+1}]$ is a convex bump in ${\cal X}$ and $U_j$ is contained in either ${\cal X}\setminus \bar V'$ or $V''$. 
\end{itemize}

The sequence of convex bumps $([D_j,U_j,D_{j+1}])_{0\leq j\leq k}$ yields inclusion
\begin{equation}\label{inclusions}
X_0=D_0\subset D_1\subset \dots 	\subset D_{k+1}=Z_0.
\end{equation}
We can of course assume all inclusions in (\ref{inclusions}) are strict, because, if, for a certain $j_0$ we have $D_{j_0}=D_{j_0+1}$, we can remove the convex bump $[D_{j_0},U_{j_0},D_{j_0+1}]$ from the sequence. Since $D_{j+1}=D_j\cup U_j$, it follows that for any $0\leq j\leq k$, $U_j\not\subset D_j$, so  $U_j\not\subset D_0=X_0$, so $U_j$ cannot be contained in $V''$. Therefore, for any $0\leq j\leq k$ we have $U_j\subset {\cal X}\setminus \bar V'$. This implies $U_j\subset {\cal X}\setminus   V'$, so $\bar U_j\subset {\cal X}\setminus   V'\subset {\cal X}\setminus K$.

Starting with ${\cal P}$ and applying inductively Proposition \ref{ExtBdlOnBump2} (1), we obtain holomorphic bundles ${\cal P}_j$ of class $\rg$ on $\bar D_j\setminus K$ such that ${\cal P}_0={\cal P}$ and ${\cal P}_{j+1}|_{D_j}\simeq{\cal P}_j$.  In particular we obtain a holomorphic bundle ${\cal Q}\edf {\cal P}_{k+1}$ of class $\rg$ on $\bar D_{k+1}\setminus K=Z\setminus K$ whose restriction to $\bar D_{0}\setminus K=X\setminus K$ is   ${\cal P}$.
\vspace{2mm}\\
(2) Assume now that  $X_0\Subset Z_0$ is a general strictly pseudoconvex extension in ${\cal X}$, and that the underlying topological bundle $P$ of   ${\cal P}$  is isomorphic to the restriction to $X\setminus K$ of a topological $G$-bundle $P'$ defined on $\bar Z\setminus K$. 

This time we make use \cite[Corollary 2.8]{HL} to the extension $(X_0,Z_0)$ and the open cover $({\cal X}\setminus \bar V',V'')$ of ${\cal X}$ defined as  above. We obtain strictly pseudoconvex domains $D_j$ ($0\leq j\leq k+1$), $U_j$, ($0\leq j\leq k$) in ${\cal X}$ such that $X_0 = D_0$,  $Z_0=D_{k+1}$, and  for any $0\leq j\leq k$, the triple $[D_j,U_j,D_{j+1}]$ is a special pseudoconvex bump in ${\cal X}$ with $\bar U_j\subset {\cal X}\setminus K$.

Put $P'_j\edf P'|_{\bar D_j\setminus K}$. We have $P'_0\simeq P$ by assumption, and $P'_{j+1}|_{\bar D_j\setminus K}=P'_j$ for $0\leq j\leq k$ by the definition of $P'_j$, $P'_{j+1}$. By induction, applying Proposition \ref{ExtBdlOnBump2} (2) and starting with ${\cal P}_0\edf{\cal P}$, we obtain for $0\leq j\leq k+1$ holomorphic bundles ${\cal P}_j$ of class $\rg$ on $\bar D_j\setminus K$ such that the underlying topological bundle $P_j$ of ${\cal P}_j$ is isomorphic to $P'_j$, and such that ${\cal P}_{j+1}|_{\bar D_j\setminus K}={\cal P}_j$ for $j\leq k$.
 At the $j$-th step, when we construct ${\cal P}_{j+1}$ by extending ${\cal P}_j$, we do  have a topological bundle  on $\bar D_{j+1}\setminus K$ whose restriction to $\bar D_{j}\setminus K$ is isomorphic to $P_j$, namely $P'_{j+1}$; so Proposition \ref{ExtBdlOnBump2} (2) applies.
 
 In particular,  we obtain a holomorphic bundle ${\cal Q}\edf {\cal P}_{k+1}$ of class $\rg$ on $\bar D_{k+1}\setminus K=Z\setminus K$ whose restriction to $\bar D_{0}\setminus K=X\setminus K$ is   ${\cal P}$ and whose underlying topological bundle is isomorphic to $P'_{k+1}=P'$.

%
%
%

%

\end{proof}

\end{document}